\newtheorem{theorem}{Theorem}
\newtheorem{lemma}{Lemma}
\newtheorem{proposition}{Proposition}
\begin{document}

\title[]{A curious dynamical system in the plane}

\author[]{Stefan Steinerberger}
\address{Department of Mathematics, University of Washington, Seattle, WA 98195, USA}
\email{steinerb@uw.edu}

\author[]{Tony Zeng}
\address{Department of Mathematics, University of Washington, Seattle, WA 98195, USA}
\email{txz@uw.edu}

\begin{abstract} For any irrational $\alpha > 0$ and any initial value $z_{-1} \in \mathbb{C}$, we define
a sequence of complex numbers $(z_n)_{n=0}^{\infty}$ as follows: $z_n$ is $z_{n-1} + e^{2 \pi i \alpha n}$ or $z_{n-1} - e^{2 \pi i \alpha n}$, whichever has the smaller absolute value.
If both numbers have the same absolute value, the sequence terminates at $z_{n-1}$ but this happens rarely.
This dynamical system has astonishingly intricate behavior: the choice of signs in $z_{n-1} \pm e^{2 \pi i \alpha n}$ appears to eventually become periodic (though the period can be large). We prove that if one observes periodic signs for a sufficiently long time (depending on $z_{-1}, \alpha$), the signs remain periodic for all time. The surprising complexity of the system is illustrated through examples.
\end{abstract}

\maketitle

\section{Introduction and Results}
\subsection{Introduction}
Let $z_{-1} \in \mathbb{C}$ and let $\alpha > 0$ be an irrational number. We define a sequence of complex numbers $(z_n)_{n=0}^{\infty}$ by setting, for $n \geq 0$, 
$$ z_n = z_{n-1} \pm e^{2 \pi i \alpha n}$$
where the sign is chosen so as to minimize the absolute value. If both choices of sign lead to a complex number with the same absolute value, i.e. $|z_{n-1} +e^{2 \pi i \alpha n}| =|z_{n-1} -e^{2 \pi i \alpha n}|$, then we say the sequence is only defined up to $z_{n-1}$ (this is of minor importance as it happens very rarely). We were motivated by the remarkably intricate complicated behavior that this dynamical system is able to exhibit. Examples are shown throughout starting with Fig. 1 and Fig. 2.

\begin{center}
    \begin{figure}[h!]
\begin{tikzpicture}
    \node at (0,0) {\includegraphics[width=0.3\textwidth]{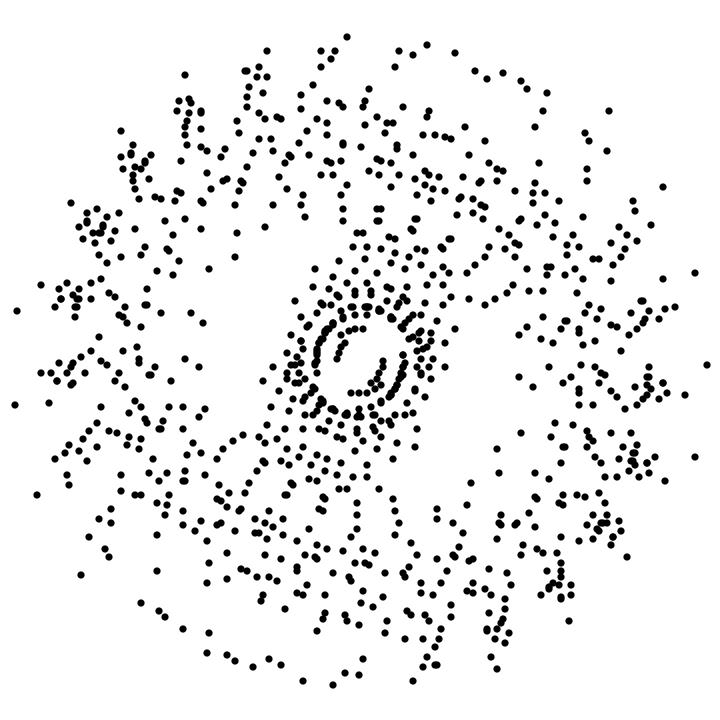}};
    \node at (4,0) {\includegraphics[width=0.3\textwidth]{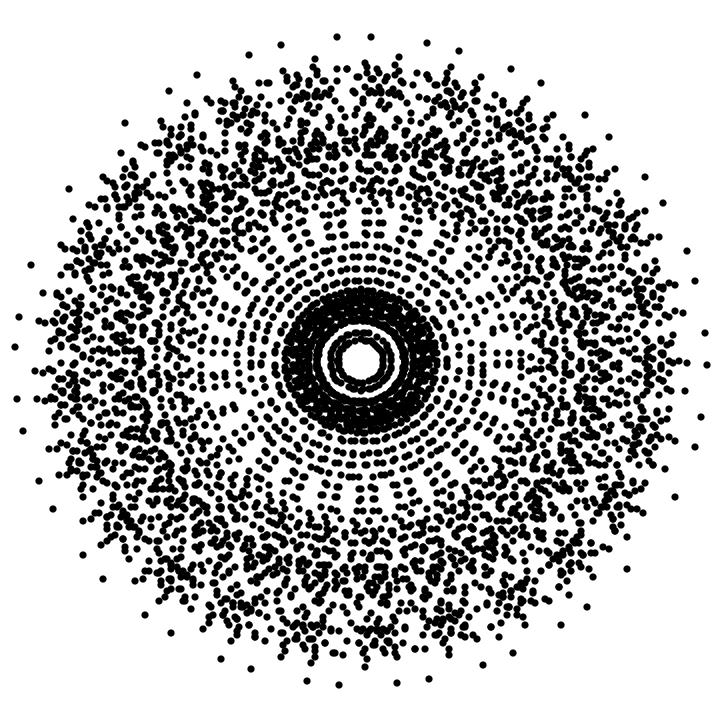}};
    \node at (8,0) {\includegraphics[width=0.3\textwidth]{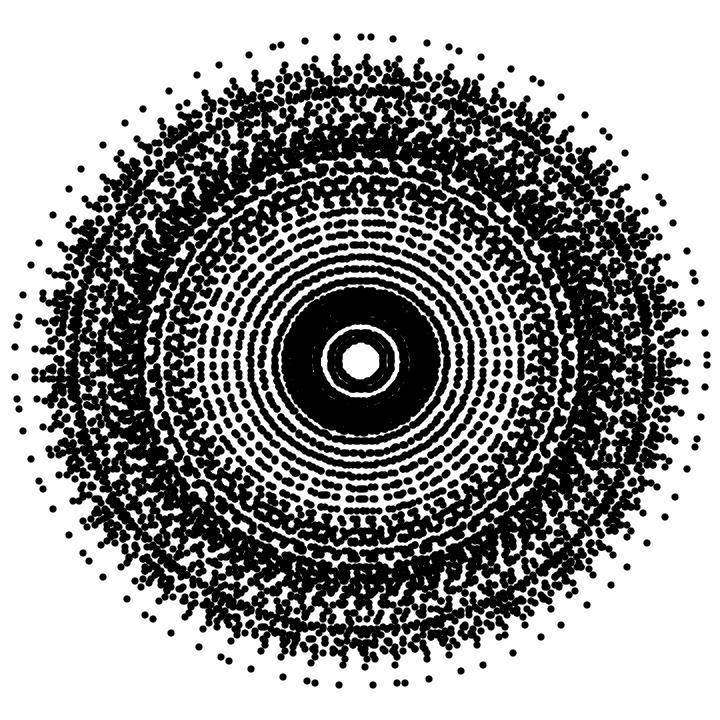}};
\end{tikzpicture}
\caption{The first 1000 (left), 5000 (middle) and 10000 (right) terms for $z_{-1} = 0.0001+ 5i$ and $\alpha =  1.0415/\sqrt{2 \pi^2}$. The sign pattern is periodic with period 222.}
    \end{figure}
\end{center}

Numerical examples suggest that `most' initial values $z_{-1}$ and `most' values of $\alpha$ lead to a sequence $(z_n)_{n=0}^{\infty}$ all of whose elements are eventually trapped in a single circle (by which we mean that all elements of the sequence $z_n$ for $n \geq N$ are contained in a circle), see Fig. 4 or Fig. 5. However, exceptions are not infrequent (especially when $\alpha$ is close to $0, 1/2$ or $1$): most of the examples shown in the various Figures were discovered by looking at many random choices for $\alpha$ and $z_{-1}$.

\begin{center}
    \begin{figure}[h!]
\begin{tikzpicture}
    \node at (0,0) {\includegraphics[width=0.3\textwidth]{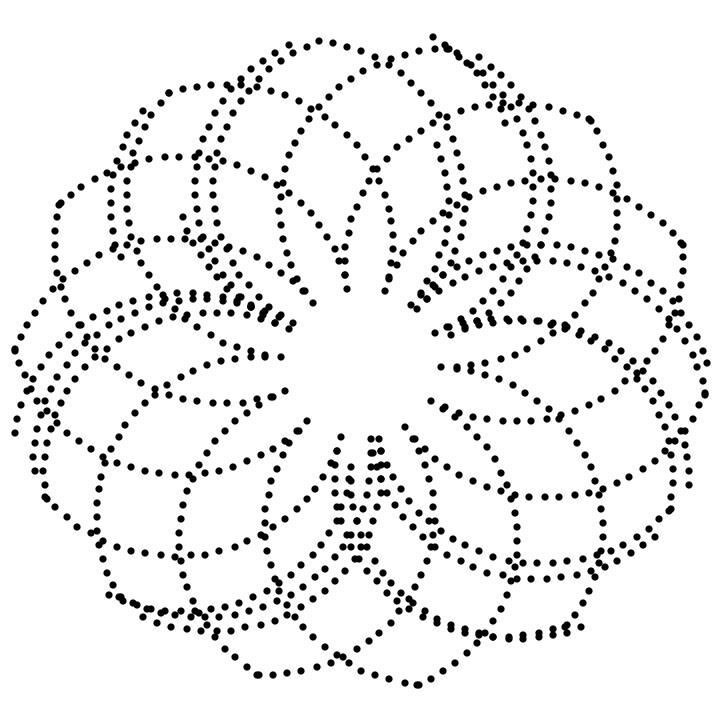}};
    \node at (4,0) {\includegraphics[width=0.3\textwidth]{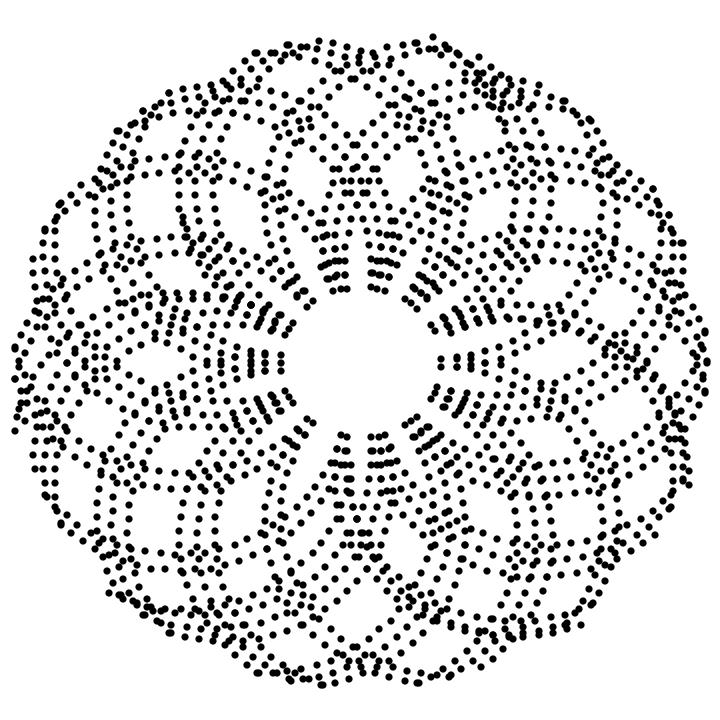}};
    \node at (8,0) {\includegraphics[width=0.3\textwidth]{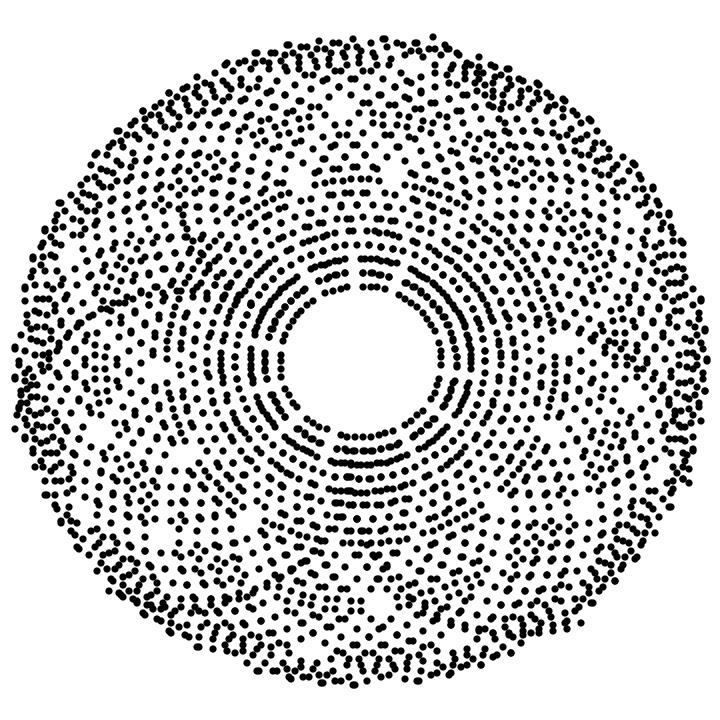}};
\end{tikzpicture}
\caption{The system with initial value
 $z_{-1} = 2.0176 + 4.8585 i$ and $\alpha = 0.00702367$. The pictures show the
terms $z_n$ for $2000 \leq n \leq 3000$ (left), $2000 \leq n \leq 4000$ (middle) and $2000 \leq n \leq 5000$ (right). The sequence of signs is periodic with period $51$.}
    \end{figure}
\end{center}

\subsection{Periodic signs implies concentric circles.} 
We start with a basic observation: when looking at the dynamical system for `generic' initial values, the sequence of signs $\pm 1$ that are chosen appears to become periodic.  A periodic choice of signs corresponds to a simple geometric pattern: the sequence is ultimately contained in a union of finitely many concentric circles (see, for example, Fig. 3).

\begin{figure}[h!]
    \centering
    \begin{tikzpicture}
        \node at (0,0) {\includegraphics[width=0.3\textwidth]{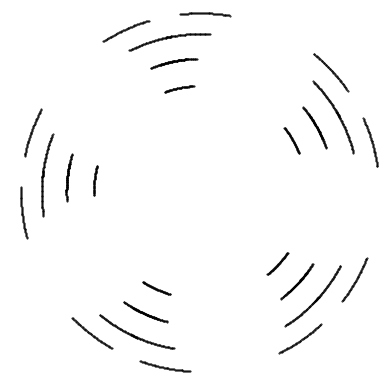}};
        \node at (4,0) {\includegraphics[width=0.3\textwidth]{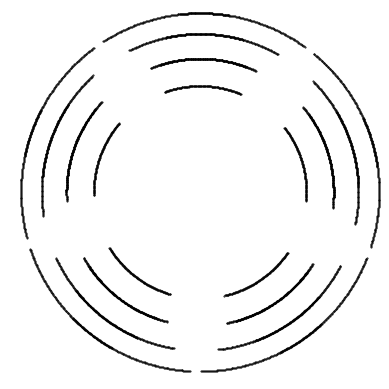}};
        \node at (8,0) {\includegraphics[width=0.3\textwidth]{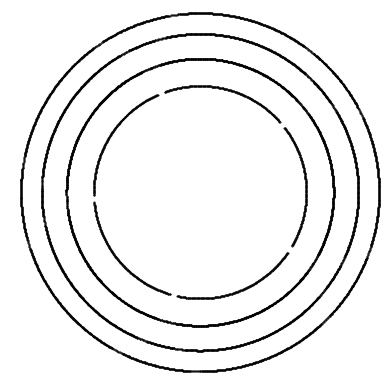}};
    \end{tikzpicture}
    \caption{The system with initial value \(z_{-1} = 1 - i\) and \(\alpha = \sqrt{2} / 3\). The pictures show the terms \(z_n\) for \(100 \le n \le 2000\) (left), \(100 \le n \le 5000\) (middle), and \(100 \le n \le 8000\) (right). The sequence of \(\pm\) signs is periodic with period \(14\).}
    \label{fig:14}
\end{figure}

\begin{theorem}
    If the sequence of signs is eventually periodic with period $p$, eventually all elements of the sequence are contained in a union of at most $p$ concentric circles.
\end{theorem}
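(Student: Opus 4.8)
The plan is to split the tail of the sequence into $p$ arithmetic progressions according to the residue of the index modulo $p$, to show each such subsequence lies on a fixed circle, and finally to show that all $p$ of these circles share the same center.

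\emph{Step 1: reduce to one residue class.} Write the recursion as $z_n = z_{n-1} + \varepsilon_n e^{2\pi i \alpha n}$ with $\varepsilon_n \in \{\pm 1\}$, and suppose $\varepsilon_{n+p} = \varepsilon_n$ for all $n \geq N$. Fix $r \in \{0,1,\dots,p-1\}$ and sum the recursion over a full period to get, for $m \geq 0$,
\[
z_{N+r+(m+1)p} - z_{N+r+mp} = e^{2\pi i \alpha(N+r)}\, q^m \sum_{k=1}^{p} \varepsilon_{N+r+mp+k}\, e^{2\pi i \alpha k}, \qquad q := e^{2\pi i \alpha p}.
\]
The crucial observation is that once $n \geq N$, periodicity forces $\varepsilon_{N+r+mp+k}$ to depend only on $(r+k) \bmod p$ and not on $m$, so the inner sum is a constant $A_r$. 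Hence the increment over one period equals $B_r q^m$ with $B_r := e^{2\pi i \alpha(N+r)}A_r$ independent of $m$.

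\emph{Step 2: each subsequence lies on a circle.} Since $\alpha$ is irrational, $\alpha p \notin \mathbb{Z}$, so $q \neq 1$ and we may telescope the geometric series:
\[
z_{N+r+mp} = z_{N+r} + B_r \frac{q^m - 1}{q-1} = \left( z_{N+r} - \frac{B_r}{q-1} \right) + \frac{B_r}{q-1}\, q^m .
\]
Because $|q^m| = 1$, the subsequence $(z_{N+r+mp})_{m\geq 0}$ lies on the circle of radius $|B_r|/|q-1|$ centered at $P_r := z_{N+r} - B_r/(q-1)$ (a single point when $B_r = 0$). Ranging over $r = 0,\dots,p-1$ exhibits all of $(z_n)_{n\geq N}$ inside a union of at most $p$ circles.

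\emph{Step 3: the circles are concentric.} It remains to show $P_r$ does not depend on $r$, and this is the part that needs the most care. From the recursion, $z_{N+r+1} = z_{N+r} + \varepsilon_{N+r+1} e^{2\pi i \alpha(N+r+1)}$. A cyclic reindexing of the period-sum defining $A_{r+1}$ versus $A_r$ (the windows differ by a cyclic shift, with a wrap-around correction at $k=p$) should give $B_{r+1} = B_r + \varepsilon_{N+r+1} e^{2\pi i \alpha(N+r+1)}(q-1)$, i.e. $B_{r+1}/(q-1) = B_r/(q-1) + \varepsilon_{N+r+1} e^{2\pi i \alpha(N+r+1)}$. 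Subtracting, the two identical shifts cancel and $P_{r+1} = P_r$. Thus all circles are centered at the common point $P := P_0$, which is the claim. I expect the only real obstacle to be the bookkeeping in this last step: one must check that the $B_r$ satisfy exactly the same one-step recursion as the $z_{N+r}$, so that the centers coincide rather than merely the radii.
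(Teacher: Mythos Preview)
Your proof is correct and follows essentially the same route as the paper: split the tail into $p$ residue classes, sum a geometric series to place each class on a circle with center $P_r = z_{N+r} - B_r/(q-1)$, and then verify $P_{r+1}=P_r$ via the one-step relation $B_{r+1}-B_r = \varepsilon_{N+r+1}e^{2\pi i\alpha(N+r+1)}(q-1)$. Your $B_r$ is exactly the paper's period-sum $q_{N+r}$, and your hesitant Step~3 is precisely the computation the paper carries out, so you can drop the ``should'' and state it outright.
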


The inequality is necessary: it is fairly common to see signs with period 2 ($+1$ and $-1$ alternating) where the associated sequence is contained in a single circle. It is not clear to us how the period of the signs is connected to the number of circles beyond the one-sided inequality: Fig. \ref{fig:14} shows an example where signs are periodic with period $14$ but all elements are ultimately contained in only $4$ concentric circles.

\subsection{Long-time behavior: a single circle.} One natural question is whether the figures shown throughout the paper are (a) numerically accurate and (b) reflect the overall long-time behavior. Two instructive examples are given in Fig. \ref{fig:16}: one observes rather intricate behavior for the first few elements of the sequence after which the sequence suddenly becomes very simple and all subsequent elements are contained in a single circle.
\begin{figure}[h!]
    \centering
    \begin{tikzpicture}
        \node at (0,0) {\includegraphics[width=0.35\textwidth]{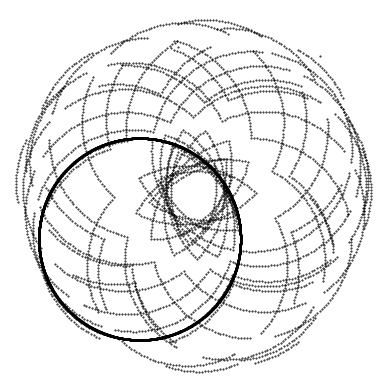}};
        \node at (6,0) {\includegraphics[width=0.37\textwidth]{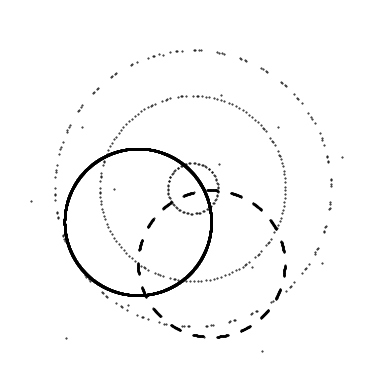}};
    \end{tikzpicture}
    \caption{Left: system with \(z_{-1} = 2+5i\) and \(\alpha = 1.1054\sqrt{2}/(2\pi)\). Right: system with \(z_{-1} = 5e^{2\pi i \cdot 0.00883}\) and \(\alpha = \sqrt{7}/(2\pi)\). Both sequences are quite intricate before settling down in a single circle.}
        \label{fig:16}
\end{figure}

This raises a natural question: when observing more complicated behavior as in Fig. 1 and Fig. 2, are we guaranteed that they remain at that level of complexity? Maybe everything always ends up in a single circle (after possibly millions and millions of terms)? Before explaining our main results, we introduce a useful concept: the ambiguity sequence. Given such a sequence $(z_n)_{n=0}^{\infty}$, the associated ambiguity sequence $(a_n)_{n=0}^{\infty}$ is the sequence of real numbers 
$$ a_n =  \left||z_{n-1} + e^{2\pi i n \alpha}| -  |z_{n-1} - e^{2\pi i n \alpha}|\right|.$$
One can think of $(a_n)_{n=0}^{\infty}$ as a measure of ambiguity that quantifies the difference between the two choices of sign: if $a_n$ is large, then one sign very clearly leads to a complex number with smaller norm. If $a_n$ is small, the difference is less pronounced. One could also think of very small values of $a_n$ as a possible sign of danger: higher numerical accuracy may be required. By assumption, we always have $a_n > 0$ since $z_n$ is not defined if both choices of sign lead to a complex number with the same absolute value. We can now state the first half of our main result. Informally, it will end up saying
\begin{quote}
    if one observes a sequence always choosing the same sign for a while and if the levels of ambiguity $a_n$ never get too small, then the sign sequence will be periodic for all time.
\end{quote}

This informal statement is in need of some additional precision. We first state the result for a special case where the signs end up being constant: in that case, all the contributing components can be made explicit.
The general periodic case (Theorem 3) will be based on Theorem 2 and uses very similar ideas.

\begin{theorem}[Single circle] Let $\alpha > 0$ be irrational. Then, using $(p_{\ell}/q_{\ell})_{\ell=1}^{\infty}$ to denote the convergents, the following holds: if, for some $k, \ell \in \mathbb{N}$ all the signs for $k \leq n \leq k+q_{\ell}$ are the same and if
$$ \min_{k \leq n \leq k + q_{\ell}} a_n \geq  \left(  4\pi +  \frac{4\pi}{|e^{2\pi i \alpha} - 1|} \right)   \frac{20}{q_{\ell}},$$
then the sign is constant for all $n \geq k$.
\end{theorem}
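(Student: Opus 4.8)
The plan is to fix the geometry during the ``constant‑sign'' window, deduce that the centre of the relevant circle lies within $\tfrac12$ of the origin, and show that this configuration then persists forever. First I would normalize: replacing $(z_n)$ by $(-z_n)$ flips every chosen sign and leaves every $a_n$ unchanged, so we may assume the common sign for $k\le n\le k+q_\ell$ is $+1$. Writing $\beta:=e^{2\pi i\alpha}$ and summing the geometric series in $z_n=z_{n-1}+\beta^{n}$, one gets
$$z_n=C+\frac{\beta^{n+1}}{\beta-1},\qquad C:=z_{k-1}-\frac{\beta^{k}}{\beta-1},\qquad k-1\le n\le k+q_\ell,$$
so the points involved lie on the circle of radius $1/|\beta-1|$ about $C$. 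A one‑line computation gives $\operatorname{Re}\bigl(\tfrac{1}{\beta-1}\bigr)=-\tfrac12$, hence the sign $+1$ is chosen at step $n$ exactly when $\operatorname{Re}(z_{n-1}\beta^{-n})=g(n)-\tfrac12<0$, where $g(n):=\operatorname{Re}(C\beta^{-n})$. Consequently, if $|C|<\tfrac12$ then $g(n)\le|C|<\tfrac12$ for \emph{every} $n$, and a straightforward induction shows that the displayed formula and the choice of the $+1$ sign then persist for all $n\ge k$ (and $a_n>0$, so the sequence never terminates); this is the conclusion. So it suffices to prove $|C|<\tfrac12$.

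Next I would extract information from the ambiguity hypothesis. From the identity $\bigl|\,|u+v|-|u-v|\,\bigr|=\dfrac{4\,|\operatorname{Re}(u\bar v)|}{|u+v|+|u-v|}$ with $u=z_{n-1}$, $v=\beta^{n}$, together with $\operatorname{Re}(z_{n-1}\beta^{-n})=g(n)-\tfrac12$ and $|z_{n-1}+\beta^{n}|+|z_{n-1}-\beta^{n}|\ge|2\beta^{n}|=2$, one gets $a_n\le 2\bigl(\tfrac12-g(n)\bigr)$ for $n$ in the window. In particular $a_n\le 2$ (the reverse triangle inequality), so the hypothesis already forces $q_\ell\ge 10\bigl(4\pi+\tfrac{4\pi}{|\beta-1|}\bigr)>40\pi$, and in particular $q_\ell\ge 4$; moreover it gives, for all $k\le n\le k+q_\ell$,
$$g(n)\ \le\ \tfrac12-\tfrac12\min_{k\le m\le k+q_\ell}a_m\ \le\ \tfrac12-D,\qquad D:=\Bigl(4\pi+\tfrac{4\pi}{|\beta-1|}\Bigr)\frac{10}{q_\ell}\ \ge\ \frac{40\pi}{q_\ell}.$$

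Finally, suppose for contradiction that $|C|\ge\tfrac12$, and write $C=|C|e^{i\varphi}$, so that $g(n)=|C|\cos(2\pi\alpha n-\varphi)$. Since $q_\ell$ is a continued‑fraction denominator, the three‑distance theorem shows that the $q_\ell+1$ points $\{\alpha n\bmod 1:k\le n\le k+q_\ell\}$ split the circle into arcs of length at most $\|q_{\ell-1}\alpha\|+\|q_\ell\alpha\|<2/q_\ell$ (using $q_\ell\|q_{\ell-1}\alpha\|+q_{\ell-1}\|q_\ell\alpha\|=1$ and $\|q_\ell\alpha\|<\|q_{\ell-1}\alpha\|$); hence there is $n^{\ast}$ in the window with $|2\pi\alpha n^{\ast}-\varphi|<2\pi/q_\ell$ modulo $2\pi$. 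Since $q_\ell\ge 4$ we have $2\pi/q_\ell\le\pi/2$, so $\cos(2\pi\alpha n^{\ast}-\varphi)\ge\cos(2\pi/q_\ell)\ge 0$, and therefore
$$g(n^{\ast})=|C|\cos(2\pi\alpha n^{\ast}-\varphi)\ \ge\ \tfrac12\cos\!\Bigl(\tfrac{2\pi}{q_\ell}\Bigr)\ \ge\ \tfrac12-\frac{\pi^{2}}{q_\ell^{2}}\ >\ \tfrac12-D,$$
the last inequality because $D\ge 40\pi/q_\ell>\pi^{2}/q_\ell^{2}$. This contradicts the previous bound, so $|C|<\tfrac12$, and the proof is complete.

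I expect the main obstacle to be the quantitative three‑distance estimate and the attendant constant‑chasing: one must be sure that the gap bound for $q_\ell+1$ consecutive multiples of $\alpha$ is exactly $<2/q_\ell$ and that every inequality above closes with the stated constants. The remaining ingredients — the normalization by negation, the geometric‑series formula for $z_n$, and the elementary bound $a_n\le 2(\tfrac12-g(n))$ — are routine. The term $\tfrac{4\pi}{|e^{2\pi i\alpha}-1|}$ in the hypothesis is what ties the threshold to the radius $1/|e^{2\pi i\alpha}-1|$ of the underlying circle, and it is the kind of estimate that the general periodic case (Theorem~3) will build on, replacing the single circle above by the $\le p$ circles traced out by a periodic sign pattern.
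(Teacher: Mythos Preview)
Your argument is correct, and it takes a genuinely different route from the paper's. Both proofs begin the same way: sum the geometric series during the constant-sign window and exploit the equidistribution of $n\alpha\bmod 1$ via continued-fraction denominators. The paper, however, packages the equidistribution step as a general \emph{Sampling Lemma}: if a Lipschitz function $f$ on the circle satisfies $f(e^{2\pi i n\alpha})>20L/q_\ell$ for $q_\ell$ consecutive samples, then $f>0$ everywhere. The paper then applies this black box to $f(z)=|h_k(z)-z|-|h_k(z)+z|$, bounding its Lipschitz constant by $4\pi+4\pi/|e^{2\pi i\alpha}-1|$; this is where the specific constant in the hypothesis enters.

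You instead identify the centre $C$ of the circle on which the $z_n$ sit and reduce everything to the clean geometric criterion $|C|<\tfrac12$, which is exactly the condition of Proposition~3 in the paper. Because you know the relevant function explicitly (it is $g(n)=|C|\cos(2\pi\alpha n-\varphi)$ rather than an abstract Lipschitz function), you can appeal to the three-distance theorem and the inequality $\cos x\ge 1-x^2/2$ to close the argument with a lot of room to spare (your final comparison $\pi^2/q_\ell^2<40\pi/q_\ell$ is extremely loose). The trade-off is that the paper's Sampling Lemma is modular: it is reused verbatim in the proof of the general periodic case (Theorem~3), whereas your approach would need to be reworked there, since the analogue of ``$|C|<\tfrac12$'' for period $p$ is less transparent. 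On the other hand, your route makes the link between Theorem~2 and the radius-$\tfrac12$ disks of Proposition~3 completely explicit.
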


\textbf{Remarks.}
\begin{enumerate}
    \item Theorem 1 then implies that, in that case, all sufficient large elements of the sequence are contained in a single circle.
    \item $k$ and $\ell$ are arbitrary parameters. $k$ allows for different starting indices, $\ell$ can be seen as a measure of the scale. A larger scale requires a weaker inequality to be true for a longer time.
    \item Generically, if the sequence ends up choosing the same sign,  the ambiguity sequence is bounded away from 0. This means that, generically, Theorem 2 will eventually be able to certify periodicity (see the proof for details).
\end{enumerate}

\begin{figure}[h!]
    \centering
    \begin{tikzpicture}
        \node at (0,0) {\includegraphics[width=0.3\textwidth]{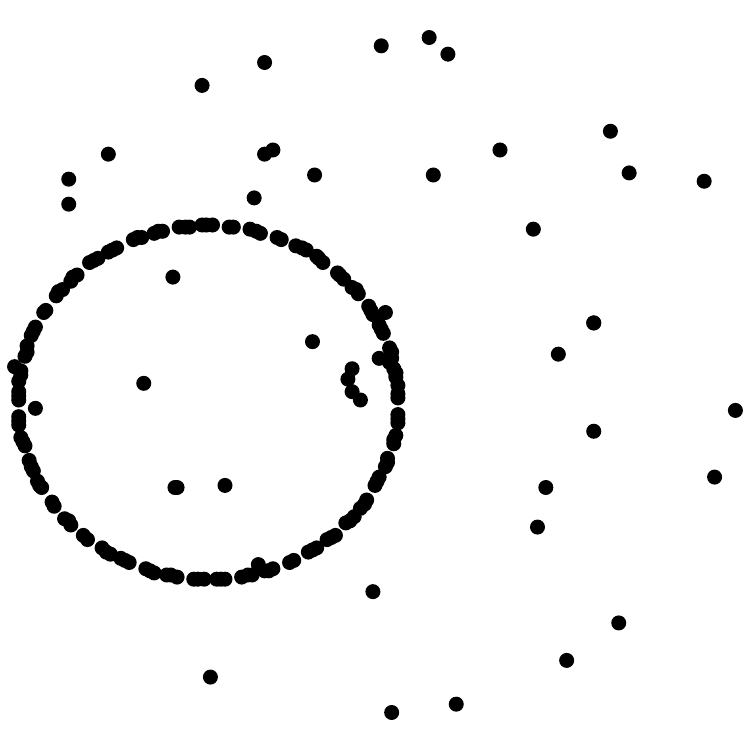}};
        \node at (6,0) {\includegraphics[width=0.5\textwidth]{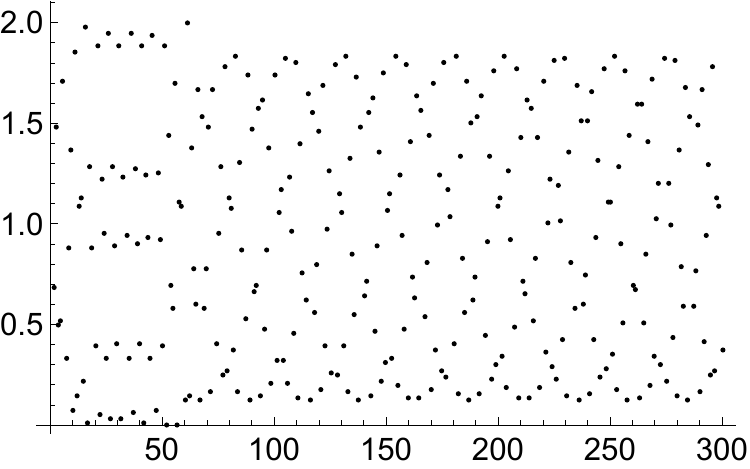}};
    \end{tikzpicture}
    \caption{Left: system with $z_{-1} = -1/2 - i$ and $\alpha=1/\sqrt{6}$. Right: the value of $a_n$ along the evolution of the system}
    \label{fig:15}
\end{figure}

We quickly illustrate Theorem 2 for a concrete example (see Fig. \ref{fig:15}): we start with $z_{-1} = -1/2 - i$ and $\alpha=1/\sqrt{6}$ and observe that the sequence initially jumps around before, around the 60th term, starting to always choose the sign $+1$ and ending up in a circle. Computing the convergents $p_{\ell}/q_{\ell}$ for $\alpha = 1/\sqrt{6}$, the sequence of denominators $q_{\ell}$ is given by
$$ 1, 2, 5, 22, 49, 218, 485, 2158, 4801, 21362, \dots$$
Picking $q_{9} = 4801$, an explicit computation shows that
$$ \left(  4\pi +  \frac{4\pi}{|e^{2\pi i \alpha} - 1|} \right) \frac{20}{4801}= 0.0796\dots$$
 Simultaneously, we observe, numerically, that $a_n \geq 0.12$ for all $100 \leq n \leq 100 + 4801$. Theorem 2 applies and guarantees that the sequence will always keep choosing the sign $+1$ and remain in a single cycle.
 There is a little bit of flexibility in how to choose $k$ and $q_{\ell}$: the sequence of ambiguities satisfies $a_n \geq 0.1$ for all $n \geq 100$. The appearance of convergent fractions $(p_{\ell}/q_{\ell})_{\ell=1}^{\infty}$ is related to the assumption of $\alpha$ being irrational: if $\alpha$ is irrational but extremely close to a rational number $p/q$, then $e^{2\pi i n \alpha}$ will be extremely close to a $q-$periodic sequence over extremely large periods of time. This is captured by the growth of $q_{\ell}$. For example, if $\alpha = 10/17 + 10^{-7} \sqrt{2}$ then the sequence of $q_{\ell}$ is $1, 2, 5, 17, 415944, \dots$ which shows that the ambiguity may need to be checked for a large number of consecutive terms.

\subsection{Long-time behavior: general periodicity.}
Arguably the most interesting case is not covered by Theorem 2: examples where the sequence exhibits an interesting periodic pattern as seen in Figure 1 and Figure 2. We can now state our main result: if one observes periodic signs for a sufficiently long period of time, then the signs remain periodic for all time.

\begin{theorem} Let $\alpha > 0$ be irrational. There exists an (explicit) function 
$$g:(0,1) \times \mathbb{N} \rightarrow \mathbb{N}$$
depending on $\alpha$ and $z_{-1}$ such that for all $\eta \in (0,1)$ the following holds: if
\begin{enumerate}
    \item the sequence of signs is $p-$periodic for $k \leq n \leq k + g(\eta, p)$ and
    \item the ambiguity sequence is not too small in that range
    $$ \min_{k \leq n \leq k + g(\eta, p)}  \left||z_{n-1} + e^{2\pi i n \alpha}| -  |z_{n-1} - e^{2\pi i n \alpha}|\right| \geq \eta,$$
\end{enumerate}
then the sequence of signs is $p-$periodic for all $n \geq k$.
\end{theorem}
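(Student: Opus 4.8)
The plan is to reduce the problem to the subsampled sequence along the arithmetic progression of common difference $p$, where the hypothesis of $p$-periodic signs collapses the recursion to a geometric series, and then to close a bootstrap using a Lipschitz-plus-equidistribution estimate.

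\emph{Setting up the explicit form.} Write $w_m = e^{2\pi i m\alpha}$ and $u = w_1^p = e^{2\pi i p\alpha}$ (not a root of unity, since $\alpha$ is irrational), and let $\sigma_r\in\{\pm1\}$, $r=0,\dots,p-1$, record the observed period. I would first show that, as long as the observed $p$-periodic pattern is the one actually followed through step $N-1$, where $N = k+mp+i$ with $0\le i\le p-1$, summing the geometric series gives
$$ z_{N-1} = D + u^m\bigl(E + w_k T_i\bigr), \qquad D = z_{k-1} - \frac{w_k S}{u-1}, \quad E = \frac{w_k S}{u-1}, $$
with $T_i = \sum_{j=0}^{i-1}\sigma_{(k+j)\bmod p}\,w_1^{j}$ and $S = T_p$ — this is exactly Theorem 1 specialized to the periodic regime. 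Since $w_N = w_k u^m w_1^i$, one computes $\mathrm{Re}\bigl(z_{N-1}\overline{w_N}\bigr) = g_i(mp\alpha \bmod 1) - c_i$, where $g_i(t) = \mathrm{Re}(A_i e^{-2\pi i t})$ with $A_i = D\,\overline{w_k w_1^i}$ (so $|A_i| = |D|$, and $g_i$ is $2\pi|D|$-Lipschitz with $\max_t g_i = |D|$) and $c_i = -\mathrm{Re}\bigl((E+w_kT_i)\overline{w_k w_1^i}\bigr)$ is a constant. Hence the sign chosen at step $N$ is $+1$ iff $g_i(mp\alpha\bmod 1) < c_i$ and $-1$ iff $> c_i$.

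\emph{Extracting the decisive inequality.} From $a_N\cdot\bigl(|z_{N-1}+w_N| + |z_{N-1}-w_N|\bigr) = 4\,|\mathrm{Re}(z_{N-1}\overline{w_N})|$ and $|z_{N-1}+w_N|+|z_{N-1}-w_N|\ge 2$, hypothesis (2) gives $|g_i(mp\alpha\bmod 1) - c_i| \ge \eta/2$ throughout the observed window, the sign being dictated by $\sigma_{(k+i)\bmod p}$. As $m$ ranges over $0\le m<M$ the points $\{mp\alpha\bmod1\}$ are $\delta$-dense, with $\delta = \delta(M,p\alpha)$ explicitly controlled (and arbitrarily small) via the three-distance theorem / convergents of $p\alpha$. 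Pushing $g_i(t_m)\le c_i - \eta/2$ through the Lipschitz bound yields $|D| = \max_t g_i(t) \le c_i - \eta/2 + 2\pi|D|\delta$ (and symmetrically $|D| \le -c_i - \eta/2 + 2\pi|D|\delta$ at the $-1$ positions). Running this argument first with the weaker input $g_i(t_m)\le c_i$ and $\delta\le 1/(4\pi)$ gives an a priori bound $|D| \le 2|c_i| \le 2|E| \le 2p/|e^{2\pi i p\alpha}-1| =: B$ depending only on $\alpha,p$ (take $i=0$, so $T_0=0$); then choosing $M$ so large that $\delta \le \eta/(8\pi B)$ upgrades the estimates to the \emph{strict} inequalities $|D| < c_i$ at every $+1$ position and $|D| < -c_i$ at every $-1$ position, with margin $\eta/4$. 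I would define $g(\eta,p) := pM$ for this $M$.

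\emph{Bootstrap.} Finally, by induction on $N\ge k$: the base case (all steps $k\le\nu\le k+pM$) is hypothesis (1); for $N > k+pM$, assuming the pattern has been followed up to $N-1$, the explicit formula for $z_{N-1}$ is valid, and $g_i(mp\alpha\bmod1)\le|D|<c_i$ (resp.\ $g_i(mp\alpha\bmod1)\ge-|D|>c_i$) forces $\mathrm{Re}(z_{N-1}\overline{w_N})$ to be strictly negative (resp.\ strictly positive), so the minimizing sign at step $N$ is exactly $\sigma_{N\bmod p}$ and is unambiguous. Thus the signs are $p$-periodic for all $n\ge k$. The hard part is the middle step: making $\delta$ simultaneously small and effectively bounded in terms of $\eta$, $p$ and the continued fraction expansion of $p\alpha$, and breaking the circularity between bounding $|D|$ and fixing $M$ — which the two-stage estimate (an $\alpha,p$-only bound $B$, then an $\eta$-quantitative refinement) is designed to do. (The appearance of convergents of $p\alpha$ rather than of $\alpha$ is the reason $g$ must be allowed to grow when $\alpha$ is near a rational with small denominator.)
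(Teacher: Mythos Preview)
Your argument is correct and shares the paper's skeleton: pass to each of the $p$ residue classes, sum the geometric series to get $z_{k+\ell p}$ in closed form, and reduce sign-stability to the strict one-sidedness of a Lipschitz function on $\mathbb{S}^1$ sampled along the orbit of $p\alpha$. The one substantive difference is the choice of that function. You test $\mathrm{Re}(z_{N-1}\overline{w_N})=g_i(t)-c_i$, whose Lipschitz constant is $2\pi|D|$ and hence a~priori unknown; this is what forces your two-stage bootstrap (a crude bound $|D|\le B$ using only $g_i(t_m)\le c_i$, then the $\eta$-quantitative refinement). The paper instead applies its Sampling Lemma directly to the ambiguity function
\[
f(e^{it})=\Bigl|z_k+q\,\tfrac{e^{it}-1}{e^{2\pi i\alpha p}-1}-\varepsilon e^{it}\Bigr|-\Bigl|z_k+q\,\tfrac{e^{it}-1}{e^{2\pi i\alpha p}-1}+\varepsilon e^{it}\Bigr|,
\]
whose Lipschitz constant is bounded by a quantity depending only on $|q|\le p$ and $|e^{2\pi i\alpha p}-1|$, so the circularity never arises and a single pass suffices. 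Both routes produce an explicit $g$ governed by the convergents of $p\alpha$ (neither actually uses the allowed dependence on $z_{-1}$); yours is a valid but slightly longer path to the same destination, and the paper's shortcut is worth noting: working with the difference of norms rather than the real part keeps the Lipschitz constant free of the unknown center $D$.
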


\textbf{Remarks.}
\begin{enumerate}
\item Theorem 3 can be summarized as: if one observes periodic signs for a while and does not run into issues of numerical accuracy, the sequence of signs is going to be periodic for all time.
    \item As in Theorem 2, the result is nearly sharp in the sense that for `generic' examples with periodic signs the ambiguity sequence remains bounded away from 0 and Theorem 3 applies for a suitable choice of $k$ and $\eta$.
    \item The function $g(\eta, p)$ could be made explicit in terms of $p$ and $\alpha$: as in Theorem 2, the convergents coming from the continued fraction expansion play a role in how quickly $g(\eta, p)$ tends to infinity as $\eta \rightarrow 0^+$ (see proof).
\end{enumerate}

\subsection{Stability and Symmetries}
We start by describing an important notion of stability: if we have a sequence starting $z_{-1}$ and the minimal ambiguity is uniformly bounded away from 0, then changing $z_{-1}$ a little does not substantially change the behavior of the sequence.

\begin{proposition}
    Let $\alpha > 0$ and $z_{-1} \in \mathbb{C}$. Replacing \(z_{-1}\) by \(z_{-1} + w\) with
 $$  |w| < \frac{1}{2} \inf_{n \ge 0} a_n = \frac{1}{2} \inf_{n \geq 0} \big| |z_n + e^{2\pi i n \alpha}| - |z_n - e^{2\pi i n \alpha}| \big| $$
shifts the entire sequence by \(w\), i.e.
$z'_n = z_n + w$ for all $n \geq 0$.
\end{proposition}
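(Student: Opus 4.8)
\emph{Proof proposal.} The plan is a single induction on $n$ that simultaneously shows the perturbed sequence $(z'_n)$ never terminates and that $z'_n = z_n + w$. First I would dispose of a degenerate possibility: the hypothesis $|w| < \tfrac12\inf_{n \ge 0} a_n$ can only be satisfied when $r := \inf_{n \ge 0} a_n$ is strictly positive, and in that case the original sequence is automatically infinite with $a_n \ge r > 2|w|$ for every $n$. The base case is immediate: $z'_{-1} = z_{-1} + w$ by the definition of the perturbation.

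For the inductive step, assume $z'_{n-1} = z_{n-1} + w$ and let $\varepsilon_n \in \{\pm 1\}$ be the sign chosen by the original system at step $n$. Writing $A = z_{n-1} + \varepsilon_n e^{2\pi i n\alpha}$ (the winner, so $|A| < |B|$) and $B = z_{n-1} - \varepsilon_n e^{2\pi i n\alpha}$ (the loser), the two candidates for $z'_n$ are $A + w$ and $B + w$. The one estimate that does all the work is the two-sided triangle inequality
$$ |B + w| - |A + w| \;\ge\; \big(|B| - |w|\big) - \big(|A| + |w|\big) \;=\; a_n - 2|w| \;>\; 0 . $$
This shows at once that the perturbed step is unambiguous — its ambiguity is at least $a_n - 2|w| > 0$, so the sequence does not terminate — and that it again picks $\varepsilon_n$. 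Hence $z'_n = z'_{n-1} + \varepsilon_n e^{2\pi i n\alpha} = (z_{n-1} + w) + \varepsilon_n e^{2\pi i n\alpha} = z_n + w$, which closes the induction.

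I do not expect a genuine obstacle; the only point that needs care is the bookkeeping at the boundary — one must check that the hypothesis forces the original (and therefore the perturbed) sequence to be genuinely infinite rather than merely ``defined up to'' some $z_{m-1}$, and one should note that the index shift between the $a_n$ of the introduction (built from $z_{n-1}$) and the displayed infimum in the statement is purely cosmetic. If desired, the same computation records the sharper fact that the perturbed ambiguities satisfy $a'_n \ge a_n - 2|w|$, i.e. a small change of $z_{-1}$ decreases each ambiguity by at most $2|w|$; this is precisely what makes the proposition useful for transferring the hypotheses of Theorems 2 and 3 to nearby initial values.
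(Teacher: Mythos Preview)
Your proof is correct and follows essentially the same route as the paper: an induction on $n$ using the triangle inequality to show the perturbed step makes the same sign choice. Your write-up is in fact a bit cleaner --- you make the degenerate case $\inf a_n = 0$ explicit and record the quantitative bound $a'_n \ge a_n - 2|w|$ --- but the core argument is identical.
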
 
Proposition 1 naturally suggests that the minimal ambiguity along a sequence should be a natural way to classify the behavior of sequences according to their starting value $z_{-1}$: if the minimal ambiguity is large, then nearby starting values are going to behave very much like the sequence arising from $z_{-1}$ does. Small values of the minimal ambiguity suggest that a small change in the initial value $z_{-1}$ can lead to very different dynamical behavior. Plotting the ambiguity function
$$ z_{-1} \rightarrow \inf_{n \geq 0} \big| |z_n + e^{2\pi i n \alpha}| - |z_n - e^{2\pi i n \alpha}| \big|$$
shows the most surprising pictures (see Figure \ref{fig:amb_normal} and Figure \ref{fig:amb_chaos}). The dependency of \(\inf_{n \geq 0} a_n\) on \(z_{-1}\) seems to be highly nontrivial. 

\begin{figure}[h!]
    \centering
    \begin{tikzpicture}
        \node at (0,0) {\includegraphics[scale=0.47]{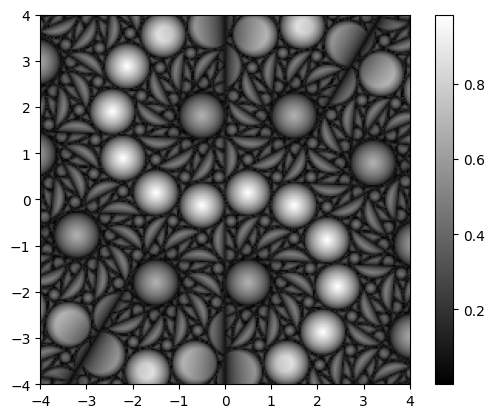}};
        \node at (6,0) {\includegraphics[scale=0.47]{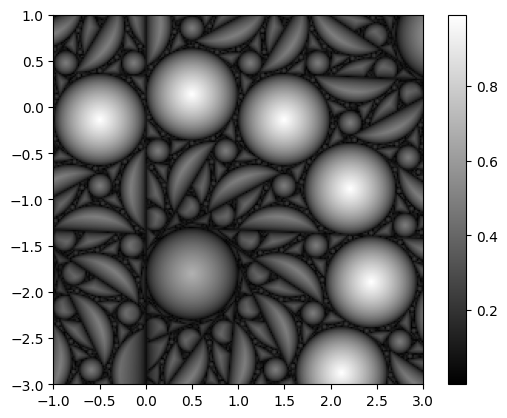}};
        \node at (0,-5) {\includegraphics[scale=0.47]{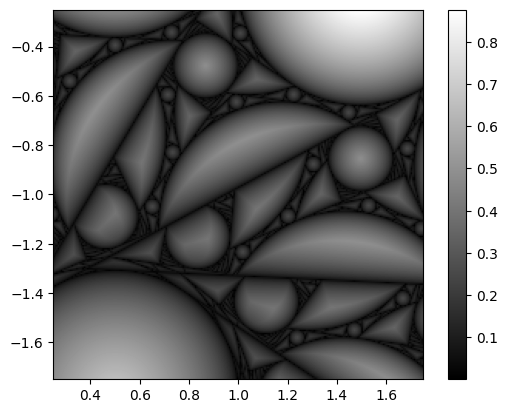}};
        \node at (6,-5) {\includegraphics[scale=0.47]{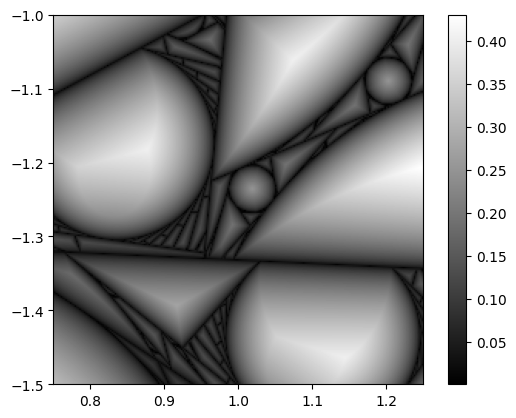}};
    \end{tikzpicture}
    \caption{With \(\alpha = \sqrt{2}\), (square roots of) minimum ambiguities of the first one thousand terms of sequences plotted against \(z_{-1}\).}
    \label{fig:amb_normal}
\end{figure}

Fig. \ref{fig:amb_normal} suggests the presence of symmetries: we note three such symmetries.

\begin{proposition}
    Let \(z_n\) be given by $0 < \alpha < 1$ and $z_{-1}$, with sign sequence \(\varepsilon_n\). 
    \begin{enumerate}
        \item The sequence arising from $1-\alpha$ is  \( \left(\overline{z_n}\right)\), with signs \(\varepsilon_n\).
        \item The sequence arising from \(\alpha \pm 0.5 \pmod 1\) is \(\left(z_n\right)\) with signs \(\varepsilon_n \cdot (-1)^n\).
      \item The sequence arising from \(-z_{-1}\) is \(-z_n\), with signs \(-\varepsilon_n\).
    \end{enumerate}
    \label{prop:sym}
\end{proposition}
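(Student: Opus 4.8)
The plan is to prove all three symmetries at once, by a single induction on $n$, using the fact that the only nontrivial feature of the dynamics is the selection rule for the sign: $\varepsilon_n$ is the value of $\varepsilon\in\{+1,-1\}$ minimizing $|z_{n-1}+\varepsilon\,e^{2\pi i\alpha n}|$, and the sequence terminates at step $n$ exactly when the two candidate values $|z_{n-1}\pm e^{2\pi i\alpha n}|$ coincide. So at each step I only need to check that the map $\varepsilon\mapsto\bigl|\widetilde z_{n-1}+\varepsilon\,\widetilde e_n\bigr|$ governing the transformed sequence agrees, as a function of $\varepsilon\in\{+1,-1\}$, with the map $\varepsilon\mapsto|z_{n-1}+\varepsilon\,e^{2\pi i\alpha n}|$, possibly after the relabeling $\varepsilon\mapsto-\varepsilon$. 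The claimed sign sequence and the claimed formula for the transformed iterate then both drop out, and since the two orbits have exactly the same ambiguous steps they terminate at the same index.

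The three cases reduce to three elementary identities for the forcing term. For (1) I would use $e^{2\pi i(1-\alpha)n}=\overline{e^{2\pi i\alpha n}}$: running the $(1-\alpha)$-system from $\overline{z_{-1}}$ and assuming inductively $\widetilde z_{n-1}=\overline{z_{n-1}}$, one has $\bigl|\widetilde z_{n-1}+\varepsilon\,e^{2\pi i(1-\alpha)n}\bigr|=\bigl|\overline{z_{n-1}+\varepsilon\,e^{2\pi i\alpha n}}\bigr|=|z_{n-1}+\varepsilon\,e^{2\pi i\alpha n}|$, so the same sign $\varepsilon_n$ is selected and $\widetilde z_n=\overline{z_n}$. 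For (2) I would use $e^{2\pi i(\alpha\pm\frac12)n}=(-1)^n e^{2\pi i\alpha n}$: with the same initial value and the hypothesis $\widetilde z_{n-1}=z_{n-1}$, $\bigl|\widetilde z_{n-1}+\varepsilon\,e^{2\pi i(\alpha\pm\frac12)n}\bigr|=\bigl|z_{n-1}+((-1)^n\varepsilon)\,e^{2\pi i\alpha n}\bigr|$, so the minimizing sign is $(-1)^n\varepsilon_n$ and $\widetilde z_n=z_{n-1}+\varepsilon_n e^{2\pi i\alpha n}=z_n$. For (3) I keep $\alpha$ fixed, start from $-z_{-1}$, and use $|{-z_{n-1}}+\varepsilon\,e^{2\pi i\alpha n}|=|z_{n-1}+(-\varepsilon)\,e^{2\pi i\alpha n}|$ to obtain minimizing sign $-\varepsilon_n$ and $\widetilde z_n=-z_n$. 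In each case the base case $n=0$ is trivial: the initial values match by construction and the displayed identity already holds at $n=0$, where $e^{2\pi i\alpha\cdot 0}=1$.

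The only point I expect to require care — and it is bookkeeping rather than a genuine obstacle — is the treatment of the ambiguous (terminating) steps: because each identity shows the whole sign-to-norm map of the transformed orbit coincides with that of $(z_n)$ up to $\varepsilon\mapsto\pm\varepsilon$, the set of ambiguous indices is literally the same for the two sequences, so one terminates if and only if the other does and the correspondence is valid on the entire orbit, finite or infinite; this is what makes the statements exact rather than merely ``generic''. I would also note that composing the three involutions produces further symmetries of the ambiguity function $z_{-1}\mapsto\inf_{n\geq 0}a_n$ — already (3) by itself shows this function is invariant under $z_{-1}\mapsto-z_{-1}$, since replacing $z_{n-1}$ by $-z_{n-1}$ leaves each $a_n$ unchanged — which is the symmetry one sees in Figure~\ref{fig:amb_normal}; and I would state (1) with initial value $\overline{z_{-1}}$ (the natural form of the conjugation symmetry), which coincides with the statement as written when $z_{-1}$ is real.
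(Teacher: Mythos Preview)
Your proof is correct and follows essentially the same approach as the paper: in each case the argument rests on the identity for the transformed forcing term ($\overline{e^{2\pi i\alpha n}}$, $(-1)^n e^{2\pi i\alpha n}$, and the sign flip under $z\mapsto -z$, respectively), from which the sign selection and the transformed iterate follow by induction. You are more explicit than the paper about the inductive structure and about the treatment of ambiguous steps, and your observation that part~(1) really requires initial value $\overline{z_{-1}}$ is well taken---the paper's own proof tacitly assumes this (``both the initial point and the additive sequence \ldots\ are entirely conjugated''), even though the proposition as stated does not specify it.
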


While not appearing to be self-similar in the way classical fractals are, Fig. \ref{fig:amb_chaos} shows that several layers of self-similarity are possible. Fig. \ref{fig:amb_normal} appears to feature balls (as well as balls cut by lines). Balls show up naturally as part of the dynamical system: there are two disks with radius $1/2$ whose interior, when used as initial values, lead to a sequence with constant signs.

\begin{proposition}
    Let $\alpha \in (0, 1)$. If
  $$  \left|z_{-1} - \frac{1}{1 - e^{2\pi i \alpha}} \right| < \frac{1}{2},$$
    then the arising sequence always picks sign $-1$ (and ends up in a circle). Likewise, if
  $  \left|z_{-1} - 1/(e^{2\pi i \alpha}-1) \right| < 1/2$,
the sequence always pick sign $+1$.
\end{proposition}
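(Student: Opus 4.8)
The plan is to establish the first assertion (the sequence always chooses sign $-1$) by a short induction, and then deduce the second assertion from it using Proposition~\ref{prop:sym}(3). Write $c := 1/(1-e^{2\pi i \alpha})$, and observe that $1/(e^{2\pi i \alpha}-1) = -c$, so the second disk is the reflection of the first through the origin. Thus if $|z_{-1}-1/(e^{2\pi i \alpha}-1)| = |(-z_{-1}) - c| < 1/2$, then $-z_{-1}$ lies in the first disk, the $(-z_{-1})$-sequence has constant sign $-1$, and Proposition~\ref{prop:sym}(3) turns this into constant sign $+1$ for the $z_{-1}$-sequence. So it suffices to treat the first disk.

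Here the key computational input is the closed form of the sequence \emph{under the hypothesis} that sign $-1$ is chosen at every step: summing a geometric series,
$$ z_{n-1} \;=\; z_{-1} - \sum_{k=0}^{n-1} e^{2\pi i k \alpha} \;=\; (z_{-1}-c) + c\, e^{2\pi i n \alpha} \qquad (n \ge 0). $$
Set $u := z_{-1} - c$, so that $|u| < 1/2$ by hypothesis and every $z_{n-1}$ lies on the circle of radius $|c|$ about $u$. Using the elementary equivalence $|a-b| < |a+b| \iff \mathrm{Re}(a\bar b) > 0$, sign $-1$ is the strictly preferred choice at step $n$ precisely when $\mathrm{Re}(z_{n-1}\, e^{-2\pi i n \alpha}) > 0$; substituting the closed form,
$$ \mathrm{Re}\!\left(z_{n-1}\, e^{-2\pi i n \alpha}\right) \;=\; \mathrm{Re}(c) + \mathrm{Re}\!\left(u\, e^{-2\pi i n \alpha}\right) \;\ge\; \mathrm{Re}(c) - |u|. $$
The punchline is the identity $\mathrm{Re}(c) = \mathrm{Re}\big(1/(1-e^{2\pi i \alpha})\big) = 1/2$, a one-line computation valid for every non-integer $\alpha$ — and this is exactly what makes radius $1/2$ the natural threshold. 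Hence the quantity above is at least $1/2 - |u| > 0$.

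To finish, I would run the induction on $n$: assuming the signs at steps $0,\dots,n-1$ are all $-1$, the closed form applies to $z_{n-1}$, the inequality just derived forces sign $-1$ at step $n$ (and, being strict, shows the sequence never terminates); the base case $n=0$ is the same inequality with $z_{-1}=u+c$. This proves the sign is $-1$ for all $n \ge 0$, and the ``ends up in a circle'' clause is immediate because $z_n = u + c\, e^{2\pi i (n+1)\alpha}$ traces the circle of radius $|c|$ centered at $z_{-1}-c$. I do not anticipate a genuine obstacle: the only thing one must notice is that $c = 1/(1-e^{2\pi i \alpha})$ is the shift linearizing the constant-sign dynamics and that its real part is exactly $1/2$; the rest is bookkeeping. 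The one point requiring care is to invoke the geometric-sum closed form only within the inductive hypothesis, so that it is legitimate to use it when deciding the $n$-th sign.
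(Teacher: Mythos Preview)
Your proof is correct. It differs from the paper's in two ways that are worth noting. First, for the sign-$(-1)$ case the paper invokes its Lemma~2, which characterises in general the set of $z_{-1}$ for which an inequality of the form $|z_{-1}+A-zB-Cz|<|z_{-1}+A-zB|$ holds for all $|z|=1$; that lemma is later reused in the proof of Theorem~4, so the paper is amortising machinery. You instead go straight to the heart of the matter: after writing $z_{n-1}=u+c\,e^{2\pi i n\alpha}$, the single estimate $\mathrm{Re}(z_{n-1}e^{-2\pi i n\alpha})\ge \mathrm{Re}(c)-|u|=\tfrac12-|u|>0$ does all the work, and the explicit induction makes the logical direction (hypothesis on $z_{-1}$ $\Rightarrow$ sign pattern) cleaner than in the paper, where the argument is phrased as a characterisation and the forward implication is left implicit. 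Second, for the sign-$(+1)$ case the paper simply repeats the computation with flipped signs, whereas you deduce it from the already-proved $(-1)$ case via the symmetry $z_{-1}\mapsto -z_{-1}$ of Proposition~\ref{prop:sym}(3); this is a nice economy. In short: the paper's route is more systematic and reusable, yours is shorter and more self-contained for this particular statement.
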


These two disks (which are clearly visible in Fig. \ref{fig:amb_normal} and Fig. \ref{fig:amb_chaos}) naturally have to be disjoint; this is reflected in the identity
$$\forall~\alpha \notin \mathbb{N} \qquad \mbox{Re}~\frac{1}{e^{2\pi i \alpha}-1} = \frac{1}{2}.$$
 Other disks in the figures seem to appear for similar reasons: characterizing them all seems to be non-trivial and potentially an interesting area for further research.
 
 \begin{figure}[h!]
    \centering
    \begin{tikzpicture}
        \node at (0,0) {\includegraphics[scale=0.45]{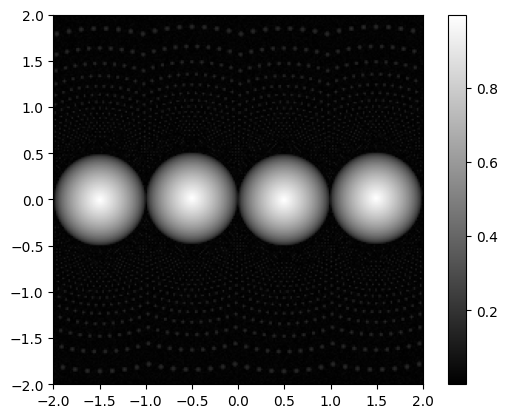}};
        \node at (6,0) {\includegraphics[scale=0.45]{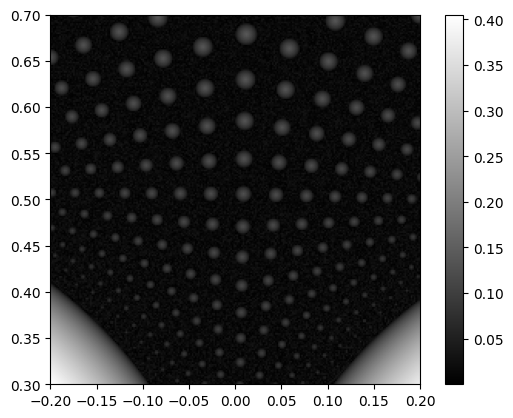}};
        \node at (0,-5) {\includegraphics[scale=0.45]{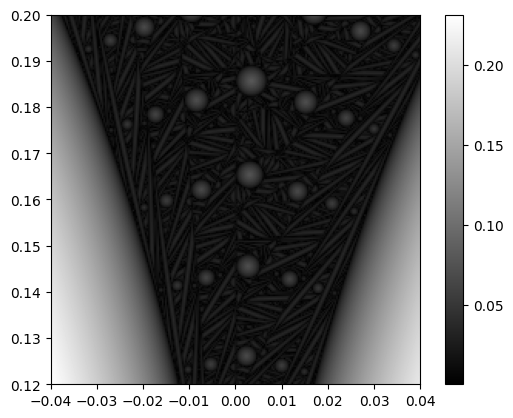}};
        \node at (6,-5) {\includegraphics[scale=0.45]{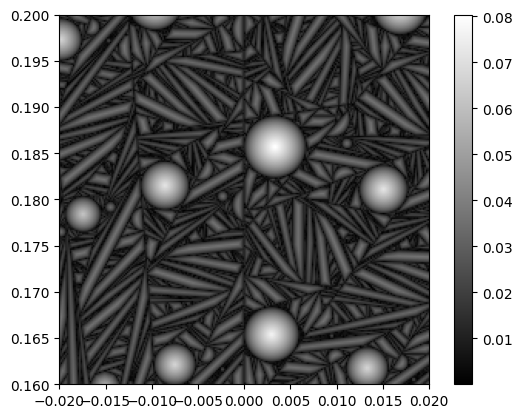}};
    \end{tikzpicture}
    \caption{With \(\alpha = 0.5 + \sqrt{3} / 300\), (square roots of) minimum ambiguities of the first ten thousand terms plotted against \(z_{-1}\).}
    \label{fig:amb_chaos}
\end{figure}

 The next natural question is whether the non-circular mysterious shapes in Fig. \ref{fig:amb_normal} and Fig. \ref{fig:amb_chaos} can be explained and this will be covered by the next result.

\begin{theorem}
    Let $\alpha$ be irrational and let $z_{-1} \in \mathbb{C}$ lead to a sequence whose sign pattern is eventually periodic. The set of initial values that eventually exhibit the same (periodic) sign pattern is contained in a finite union of a finite intersection of disks and half spaces.
\end{theorem}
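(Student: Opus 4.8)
The plan is to translate ``eventually exhibits the given periodic sign pattern'' into explicit geometric conditions on the initial value, exploiting that a periodic sign pattern turns the orbit into an affine function of the point where periodicity sets in. Write the given pattern as $\varepsilon_n$ for $n \ge N$ with $\varepsilon_{n+p} = \varepsilon_n$, and suppose a sequence $(z'_n)$ starting from some $w$ obeys $z'_n = z'_{n-1} + \varepsilon_n e^{2\pi i n \alpha}$ for all $n \ge M$, where $M \ge N$. Then for each residue $m$ modulo $p$ the point $z'_{m+kp}$ is an affine-linear function of $e^{2\pi i kp\alpha}$ as $k$ runs over $\mathbb{N}$, where $e^{2\pi i p\alpha} \ne 1$ because $p\alpha \notin \mathbb{Z}$. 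The requirement that $\varepsilon_n$ is the norm-minimizing sign at step $n$ reads $\mathrm{Re}\big(\overline{z'_{n-1}}\,\varepsilon_n e^{2\pi i n\alpha}\big) \le 0$; grouping these over $n$ in a fixed residue class modulo $p$ and using that $\{\,kp\alpha \bmod 1 : k \ge 0\,\}$ is dense, the infinitely many inequalities collapse to a single one, $|z'_{M-1} - c_m| \le r_m$ with explicit $c_m, r_m$. A short computation shows that $c_m$ is in fact independent of $m$ and $r_m = r_m(\alpha,p,\varepsilon)$ independent of $M$, so the $p$ disks are concentric and one arrives at the clean statement: the orbit of $w$ follows the pattern for all $n \ge M$ \emph{if and only if} $z'_{M-1} \in D_M := \overline{B(\Gamma_M, R)}$, a closed disk whose radius $R$ is independent of $M$, with centers obeying $\Gamma_{M+1} - \Gamma_M = \varepsilon_M e^{2\pi i M\alpha}$ and $\Gamma_{M+p} = e^{2\pi i p\alpha}\Gamma_M$; moreover the trapped orbit stays within distance $R$ of the explicit sequence $(\Gamma_n)$.

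Next I would pull this condition back to the initial value $w$. The map $w \mapsto z'_{M-1}$ is piecewise a translation: the plane is cut into finitely many cells, each a finite intersection of half-planes (the $M$ successive minimizing-sign choices), on which $z'_{M-1} = w + c_\sigma$ for a constant $c_\sigma$. Hence for each fixed $M$ the set $\{\,w : z'_{M-1}(w) \in D_M\,\}$ is a finite union of sets of the form (disk) $\cap$ (finite intersection of half-planes) --- precisely the shape permitted by the theorem. It remains to see that only finitely many $M$ matter. Here the tool is a rewinding lemma: if $z'_{M-1} \in D_M$ and the sign actually chosen at step $M-1$ equals $\varepsilon_{M-1}$, then, because the update and the identity $\Gamma_M - \Gamma_{M-1} = \varepsilon_{M-1}e^{2\pi i (M-1)\alpha}$ cancel, $z'_{M-2} - \Gamma_{M-1} = z'_{M-1} - \Gamma_M$, so $z'_{M-2} \in D_{M-1}$. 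Iterating, any $w$ that ever locks onto the pattern either already satisfies $z'_{N-1}(w) \in D_N$, or has a last ``wrong'' sign at some step $m-1 \ge N$, in which case $z'_{m-1}(w) \in D_m$ with the chosen sign equal to $-\varepsilon_{m-1}$; the latter is equivalent to $z'_{m-2}(w)$ lying in the disk $\overline{B(2\Gamma_m - \Gamma_{m-1}, R)}$ intersected with an explicit half-plane. This exhibits the set $S$ of such initial values as $\{\,z'_{N-1} \in D_N\,\} \cup \bigcup_{m > N} G_m$, with each $G_m$ again of the allowed shape.

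The main obstacle is to show the union over $m$ is essentially finite, i.e.\ to bound the transient length over all initial values that eventually lock onto the given pattern. Entering a trap forces $|z'_{m-1}| \le \max_r |\Gamma_{N+r}| + R$, and rewinding one step $|z'_{m-2}| \le \max_r |\Gamma_{N+r}| + R + 2$, so before the first locking time the orbit must fall into a fixed bounded disk; and since the greedy rule decreases $|z'_n|$ by a definite amount on average whenever $|z'_n|$ is large (by equidistribution of $n\alpha$), it does reach that disk after finitely many steps. The delicate point is to rule out that the orbit then lingers near the (densely many, fixed-size) trap disks without entering one for arbitrarily long: this is where the quantitative equidistribution rate --- the convergents $p_\ell/q_\ell$ of $\alpha$, exactly as in Theorems 2 and 3 --- must be used to produce an explicit $M^{\ast} = M^{\ast}(\alpha, z_{-1}, p)$ such that any orbit which locks onto the pattern at all does so by time $M^{\ast}$. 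Granting this, $S \subseteq \{\,z'_{N-1} \in D_N\,\} \cup \bigcup_{N < m \le M^{\ast}} G_m$, a finite union of finite intersections of disks and half-planes, which is the assertion. I expect this last quantitative step --- that late locking is impossible --- to be the technical heart of the argument, the rest being the bookkeeping above together with the equidistribution already exploited in Theorems 2 and 3.
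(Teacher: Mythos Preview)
Your geometric skeleton matches the paper's exactly: periodic signs give an affine formula for the orbit along each residue class, density of $e^{2\pi i p\alpha \ell}$ turns the countably many sign inequalities into a single disk condition (this is the paper's Lemma~2), and pulling back one greedy step sends a set $X$ to $(H_1\cap(X-w))\cup(H_2\cap(X+w))$, which preserves the ``finite union of finite intersections of disks and half-planes'' structure. That is precisely the mechanism the paper uses.

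The divergence is in how you read ``eventually exhibit the same periodic sign pattern.'' You allow the locking index $M$ to be arbitrary and then, correctly, identify bounding $M$ uniformly as the crux --- a step you do not complete. The paper avoids this entirely: it fixes the index $k$ at which the \emph{given} $z_{-1}$ becomes periodic, derives the $p$ disk conditions $z_n\in B(y_n,r_n)$ for $k\le n\le k+p-1$ as \emph{necessary} consequences of periodicity from index $k$, and then pulls back exactly those finitely many steps to $z_{-1}$. Since the theorem only claims containment, the set of interest is inside $\{w:z_n(w)\in B(y_n,r_n),\ k\le n\le k+p-1\}$, and the finiteness of the description is immediate --- no rewinding lemma, no transient bound, no quantitative equidistribution beyond plain density.

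So: under the paper's reading (locking at the same index $k$), your argument is correct but carries unnecessary machinery; drop everything after ``for each fixed $M$'' and you have the paper's proof. Under your broader reading (locking at some unspecified $M\ge N$), your worry is genuine and the paper does not address it; the uniform bound on $M$ you flag as ``the technical heart'' is neither proved by you nor supplied by the paper.
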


The proof shows slightly more: the interior of such regions is given by an intersection of (open) disks and half spaces. If we use
 $z_{-1} \rightarrow \inf_{n \geq 0} \big| |z_n + e^{2\pi i n \alpha}| - |z_n - e^{2\pi i n \alpha}| \big|$
to partition $\mathbb{C}$ into regions with similar dynamical behavior (separated by the zeros of that function) and if every sequence is eventually periodic, then Theorem 4 would have some more implications: the plane would be tiled by intersections of disks and half spaces (up to a set of measure 0).

\subsection{Open problems} While we have a relatively good understanding of the case where one observes periodic sign patterns, the inverse problem is completely open.
\begin{quote}
    \textbf{Problem.} Is the sign pattern always eventually periodic?
\end{quote}
We do not know how difficult that problem is. For fixed $\alpha$, Proposition 1 shows that the set of initial values $z_{-1}$ that lead to periodic orbits tend to have a certain `openness' condition in terms of the ambiguity sequence: this might be an indication that hypothetical non-periodic orbits, if they exist at all, must be rare. A second natural question concerns the length of periods. 

\begin{quote}
    \textbf{Long periods.} If a sequence eventually stabilizes into a periodic choice of signs, how long can this period be depending on $\alpha$?
\end{quote}

\begin{figure}[h!]
     \centering
     \begin{tikzpicture}
         \node at (0,0) {\includegraphics[scale=0.3]{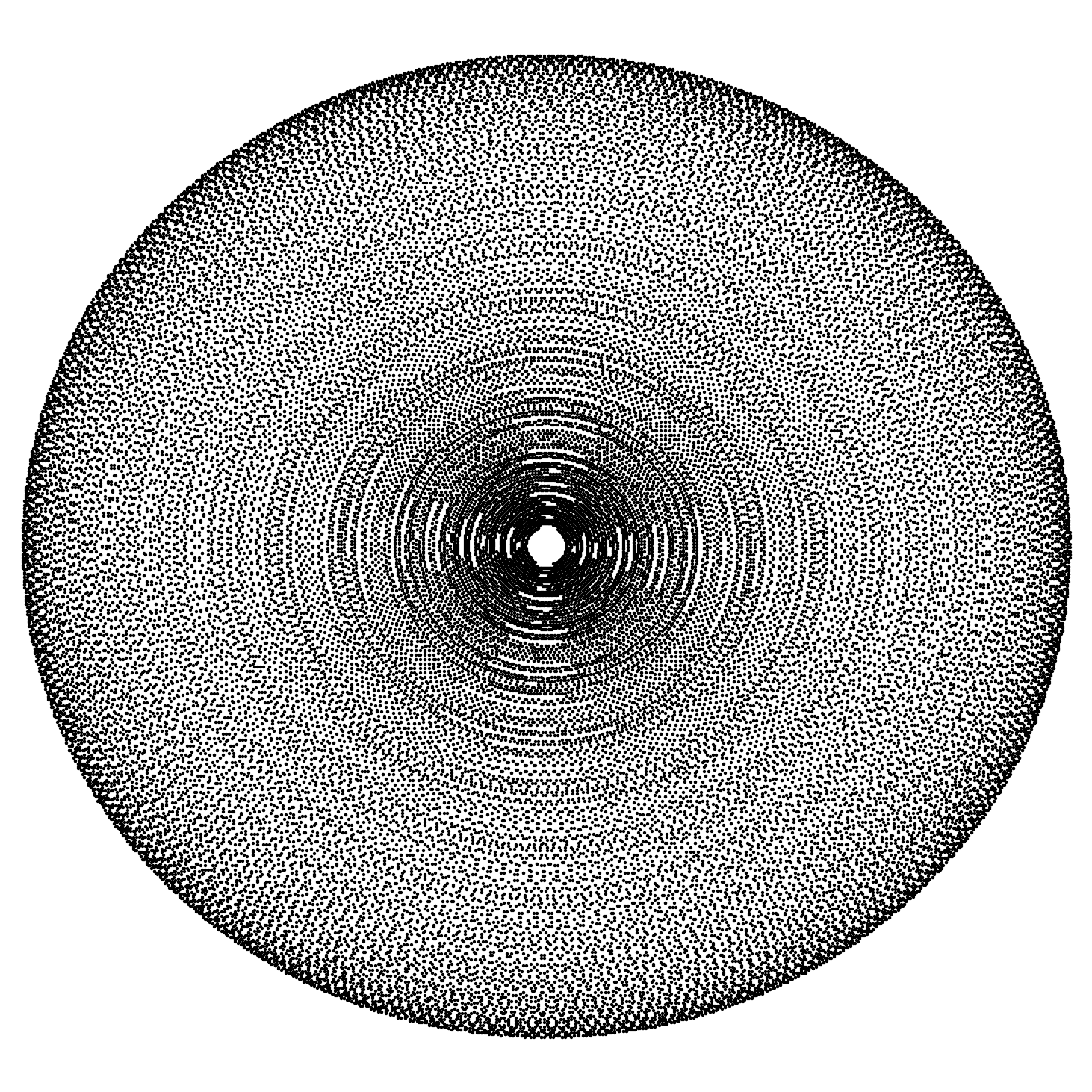}};
         \end{tikzpicture}
     \caption{A sequence with period 874 (first million terms, only every 20th term displayed).}
     \label{fig:874}
 \end{figure}

It seems that sequences can have arbitrarily long periods. A nice example is a sequence with period 874 given by 
\[ \alpha = 0.5010866 \quad \mbox{and}  \quad\quad z_{-1} = 0.747467+0.445271i\]
whose associated ambiguity sequence is bounded below by $6 \cdot 10^{-6}$. The example is shown in Fig. \ref{fig:874}. Moreover, it appears as if having a large period requires $\alpha \mod 1$ to be close to $0,1/2,1$ and one might be tempted to ask whether, for some universal $c>0$, an inequality along the lines of
$$ \mbox{length of period} \lesssim \frac{c}{ \|2 \alpha\|}$$
might be true (where $\|x\| = \inf_{n \in \mathbb{Z}} \left| x - n \right|$ is the distance to the nearest integer). It is not clear to us how difficult to this question is. It would be interesting to see whether there is a way to construct sequences with arbitrarily large periods: we observe, as a consequence of the symmetries, the following period-doubling trick.

\begin{proposition}
    Let \((z_n)\)  given by \(\alpha\) and \(z_{-1}\). If the sign sequence \(\varepsilon_n\) is \(p\)-periodic with \(p\) odd, the sequence with initial point \(z_{-1}\) and \(\alpha + 0.5 \pmod 1\) is \(2p\)-periodic.
\end{proposition}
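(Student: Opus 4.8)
The plan is to deduce this immediately from Proposition~\ref{prop:sym}(2). First I would note that since $\alpha \in (0,1)$, the parameter $\alpha + 0.5 \pmod 1$ equals either $\alpha + 1/2$ or $\alpha - 1/2$, so Proposition~\ref{prop:sym}(2) applies verbatim: the sequence generated by this shifted parameter with the same initial value $z_{-1}$ has the identical complex iterates $(z_n)$ but sign sequence $\varepsilon_n' := \varepsilon_n \cdot (-1)^n$. Thus the whole statement reduces to a parity bookkeeping about $\varepsilon_n'$.

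The key step is then to observe that $\varepsilon_n'$ is the product of the $p$-periodic sequence $\varepsilon_n$ and the $2$-periodic sequence $(-1)^n$, so it is $\operatorname{lcm}(p,2)$-periodic, and since $p$ is odd this is $2p$. Concretely, using $(-1)^{2p} = 1$ and $\varepsilon_{n+2p} = \varepsilon_n$, one gets $\varepsilon_{n+2p}' = \varepsilon_{n+2p}(-1)^{n+2p} = \varepsilon_n (-1)^n = \varepsilon_n'$, which already gives the claimed $2p$-periodicity. To see that the period genuinely doubles, I would check that $p$ itself fails to be a period: because $p$ is odd, $\varepsilon_{n+p}' = \varepsilon_{n+p}(-1)^{n+p} = -\varepsilon_n(-1)^n = -\varepsilon_n'$, and since $\varepsilon_n' \in \{\pm 1\}$ this is never equal to $\varepsilon_n'$.

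If one wants the minimal period to be exactly $2p$ (assuming $p$ is the minimal period of $\varepsilon_n$), I would invoke the standard fact that the set of periods of a purely periodic sequence is closed under $\gcd$, so the minimal period $d$ of $\varepsilon_n'$ divides $2p$. Writing $\varepsilon_n = \varepsilon_n'(-1)^n$, if $d$ were odd then $\varepsilon_{n+d} = \varepsilon_n'(-1)^{n+d} = -\varepsilon_n$, which is impossible; hence $d = 2e$ with $e \mid p$, and then $\varepsilon_{n+2e} = \varepsilon_{n+2e}'(-1)^{n+2e} = \varepsilon_n'(-1)^n = \varepsilon_n$, so $\varepsilon_n$ is $2e$-periodic and therefore (being also $p$-periodic with $p$ odd) $\gcd(2e,p) = e$-periodic; minimality of $p$ forces $e = p$ and $d = 2p$.

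I do not expect any real obstacle here: the entire content is already packaged in Proposition~\ref{prop:sym}(2), and the rest is elementary arithmetic of periods. The only point requiring a little care is the interpretation of ``$p$-periodic'' — if the hypothesis only asserts eventual $p$-periodicity of $\varepsilon_n$, the computations above still go through on the tail where periodicity holds, yielding eventual $2p$-periodicity of the shifted system.
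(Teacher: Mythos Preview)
Your proof is correct and follows essentially the same route as the paper: invoke Proposition~\ref{prop:sym}(2) to identify the new sign sequence as $\varepsilon_n(-1)^n$, then do the elementary period bookkeeping. Your final paragraph on the minimal period is in fact more complete than the paper's own argument, which only explicitly rules out the periods $2$ and $p$ rather than all proper divisors of $2p$.
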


The largest period we are currently aware of is a sequence with period 2258 given by $\alpha = 0.50015827$ and initial value 
$$ z_{-1} = 0.5761982862055985+0.9356408428886818i.$$
However, we have no reason to believe that this is in any way extremal; it appears that, by choosing $\alpha$ close to $1/2$, arbitrarily large periods are possible.
 Another innocent question is as follows.

\begin{quote}
    \textbf{Which numbers are periods?} Which numbers can arise as periods of the dynamical system for some $\alpha$ and $z_{-1}$?
\end{quote}
It appears, judging purely from empirical observations, as if not all integers can arise as a period. For example, we have observed empirically that periods are not divisible by 4. Is this always the case?

\subsection{Related results.} 
 Our original interest was sparked by a beautiful result of 
Bettin-Molteni-Sanna \cite{Bettin} who prove, among many other things, the following: pick some $x \in \mathbb{R}$, let $x_1 = 1$ and
$$ x_{n} = \begin{cases} x_{n-1} + \frac{1}{n} \qquad &\mbox{if}~x_n \leq x \\ x_{n-1} - \frac{1}{n} \qquad &\mbox{if}~a_n > x. \end{cases}$$
This corresponds to taking the standard harmonic series $\sum 1/n$ and inserting signs in a greedy way to get as close as possible to $x$. It is easy to see that $|x_n - x| \leq 2/n$ for $n$ sufficiently large. Bettin-Molteni-Sanna \cite{Bettin} show the existence of subsequence that converge \textit{much} faster, even faster than a polynomial rate in $n$, for generic $x$. We discovered our system by looking at hypothetical two-dimensional analogues of this result.
A number of (vaguely) related results come from the abstract theory of rearrangements of series in Banach spaces (see Kadets-Kadets \cite{banach}), for example the results of Calabi-Dvoretzky \cite{calabi} or Dvoretzky-Hanani \cite{dv}. After completion of the project we learned that our results can be thought of as results in the area concerned with the \textit{dynamics of piecewise isometries}. Its setting is as follows: given a subset $X \subset \mathbb{R}^d$ and a finite partition $X = P_1 \cup P_2 \cup \dots P_r$, a map $T:X \rightarrow X$ is a piecewise isometry if is restriction to $P_i$ is an isometry for all $1 \leq i \leq r$. 

\begin{center}
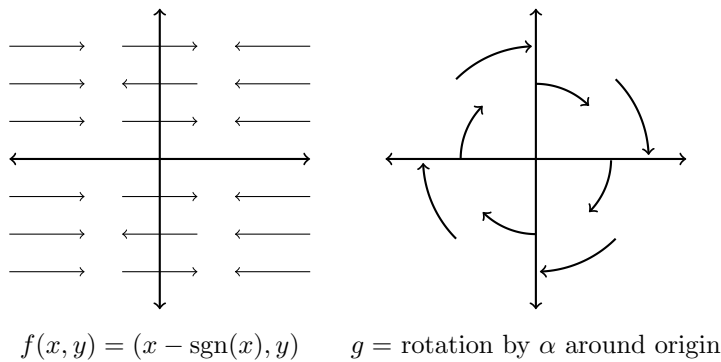
\begin{figure}[h!]
    \begin{tikzpicture}
   \draw [thick, <->] (-2,0) -- (2,0);
  \draw [thick, <->] (0,-2) -- (0,2);
  \draw [->] (2, 1) -- (1,1);
    \draw [->] (2, -1) -- (1,-1);
  \draw [->] (2, -1.5) -- (1,-1.5);
    \draw [->] (2, 1.5) -- (1,1.5);
        \draw [->] (2, 0.5) -- (1,0.5);
    \draw [->] (2, -0.5) -- (1,-0.5);
   \draw [->] (0.5, 1) -- (-0.5,1);
  \draw [->] (-0.5, 0.5) -- (0.5,0.5);
     \draw [<-] (0.5, 1.5) -- (-0.5,1.5);
        \draw [->] (0.5, -1) -- (-0.5,-1);
  \draw [->] (-0.5, -0.5) -- (0.5,-0.5);
     \draw [<-] (0.5, -1.5) -- (-0.5,-1.5);
  \draw [->] (-2, 1) -- (-1,1);
    \draw [->] (-2, -1) -- (-1,-1);
  \draw [->] (-2, -1.5) -- (-1,-1.5);
    \draw [->] (-2, 1.5) -- (-1,1.5);
 \draw [->] (-2, 0.5) -- (-1,0.5);    
   \draw [->] (-2, -0.5) -- (-1,-0.5);    
   \node at (0,-2.5) {$f(x,y) = (x - \mbox{sgn}(x), y)$};
   \draw [thick, <->] (3,0) -- (7,0);
  \draw [thick, <->] (5,-2) -- (5,2);
\draw [thick,domain=180:135, ->] plot ({5+cos(\x)}, {sin(\x)});
\draw [thick,domain=135:92, ->] plot ({5+1.5*cos(\x)}, {1.5*sin(\x)});
\draw [thick,domain=90:45, ->] plot ({5+cos(\x)}, {sin(\x)});
\draw [thick,domain=45:2, ->] plot ({5+1.5*cos(\x)}, {1.5*sin(\x)});
\draw [thick,domain=359:315, ->] plot ({5+cos(\x)}, {sin(\x)});
\draw [thick,domain=315:272, ->] plot ({5+1.5*cos(\x)}, {1.5*sin(\x)});
\draw [thick,domain=270:225, ->] plot ({5+cos(\x)}, {sin(\x)});
\draw [thick,domain=225:182, ->] plot ({5+1.5*cos(\x)}, {1.5*sin(\x)});
\node at (5,-2.5) {$g =$ rotation by $\alpha$ around origin};
    \end{tikzpicture}
    \caption{ $g \circ f$ is a piecewise isometric map whose orbits, suitably rotated, correspond to orbits of our procedure.}
    \label{fig:piecewise}
\end{figure}
\end{center}

Many of the fundamental results in the area have been pioneered by Arek Goetz, starting with his PhD thesis \cite{goetz0} and the subsequent work \cite{goetz1, goetz000, goetz00, goetz2, goetz11} as well as
Ashwin-Goetz \cite{ash1,ash2},  Ashwin-Goetz-Peres-Rodrigues \cite{ashwin}, Boshernitzan-Goetz \cite{misha}, 
Cheung-Goetz-Qyas \cite{Cheung} and Goetz-Quas \cite{quas}.
There have been many subsequent developments and applications, we refer to Bruin-Deane \cite{bruin}, Deane \cite{deane}, Park et al \cite{park}, Smith et al \cite{smith} and Sturman \cite{stur}. At first glance, our definition can be thought of as a piecewise isometric map with respect to half-spaces, defined by $e^{2\pi i n \alpha}$, that rotate. By introducing an additional rotation, we obtain a piecewise isometric map (see Fig. 9) that is given as the composition of a translation on two half-spaces and a global rotation. Its iterates $y_n$, when rotated by $e^{2\pi i n\alpha}$, correspond to iterates $x_n$ of our procedure. This type of piecewise isometric map seems to not have been studied before and it stands to reason that our greedy sign choice perspective may arguable be easier to work with. Moreover, it appears that our main results (Theorem 2 and Theorem 3) appear to be new within the context of the dynamics of piecewise isometric maps.

\section{Proofs}

\subsection{Proof of Theorem 1}
\begin{proof} Introducing the sign sequence $(\varepsilon_n)_{n=0}^{\infty}$, we have for all $n > N$ that
    \[ z_n = z_N + \sum_{j = N + 1}^n \varepsilon_j e^{2\pi i \alpha j}. \]
Assume the sign sequence is periodic with period $p$, then, for all $\ell \in \mathbb{N}$,
    \begin{align*}
    \sum_{j=k+(\ell -1)p+1}^{k+\ell p} \varepsilon_j e^{2\pi i j \alpha} &=\sum_{j=k+1}^{k+p} \varepsilon_{j+ \ell p} e^{2\pi i (j+\ell p) \alpha} \\
    &= \sum_{j=k+1}^{k+p} \varepsilon_{j} e^{2\pi i j \alpha}  e^{2\pi i \ell p \alpha} = e^{2 \pi i  \ell \alpha p } \sum_{j=k+1}^{k+p} \varepsilon_j e^{2\pi i j \alpha}.    
    \end{align*}
    This last sum is independent of $\ell$: it only depends on the period $p$ and the starting point $k$ (this observation will occur repeatedly in subsequent proofs). Abbreviating
    \begin{equation} \label{eq:rec}
           q_k = \sum_{j=k+1}^{k+p} \varepsilon_j e^{2\pi i j \alpha}, 
    \end{equation}
    we deduce that for \(k \ge N\) and \(\ell \ge 0\),
    \begin{align*} 
         z_{k + \ell p} &= z_k + \sum_{j=k+1}^{k+\ell p}  \varepsilon_j e^{2\pi i j \alpha} = z_k + \sum_{s = 0}^{\ell-1} \sum_{j= k + s p + 1}^{k + (s+1)p} \varepsilon_j e^{2\pi i j \alpha} \\
         &= z_k + q_k\sum_{s=0}^{\ell -1} e^{2 \pi i \alpha p s} = z_k + q_k \frac{e^{2\pi i \alpha p \ell}-1}{e^{2\pi i \alpha p}-1} \\
         &= \left[z_k - \frac{q_k}{e^{2\pi i \alpha p} - 1}\right] + q_k \frac{e^{2\pi i \alpha p \ell}}{e^{2\pi i \alpha p}-1}
    \end{align*}
    For \(\ell \in \mathbb{N}\), this subsequence of points lies on a circle with center
    \[ c_k = z_k - \frac{q_k}{e^{2\pi i \alpha p} - 1}. \]
    Repeating the above argument for \(k \in \{N, \ldots, N + p - 1\}\) shows that all \(z_n\) for \(n > N\) lie on one of \(p\) circles. It remains to prove that these circles have a common center.
    By using the definition of the sequence and iterating the recursion \eqref{eq:rec}
    \begin{align*}
        c_{k + 1} &= z_{k + 1} - \frac{q_{k + 1}}{e^{2\pi i \alpha p} - 1} \\ 
        &= z_k + \varepsilon_{k + 1}e^{2\pi i \alpha(k + 1)} - \frac{q_k + \varepsilon_{k + p + 1}e^{2\pi i \alpha(k + p + 1)} - \varepsilon_{k + 1}e^{2\pi i \alpha p(k + 1)}}{e^{2\pi i \alpha p} - 1} \\
        &= z_k - \frac{q_k}{e^{2\pi i \alpha p} - 1} = c_k.
    \end{align*}
Therefore all the circles share the same center.
\end{proof}

\subsection{A Sampling Lemma}
The purpose of this section is to prove a Lemma that will be useful in the proof. We were unable to find the Lemma in the literature but all its ingredients are standard and it is presumably fair to say that the Lemma is well-known `in spirit'.
We recall that if $\alpha$ is an irrational number, then it has an infinite continued fraction expansion: truncating this expansion leads to a rational number $p_{\ell}/q_{\ell}$ that is very close to $\alpha$. 

\begin{lemma} Suppose $\alpha$ is an irrational real with convergents $(p_{\ell}/q_{\ell})_{\ell=1}^{\infty}$. If
 $$f:\left\{z \in \mathbb{C}: |z| = 1\right\} \rightarrow \mathbb{R}$$
 is Lipschitz-continuous with Lipschitz constant $L>0$ and $k, \ell \in \mathbb{N}$ is such that
 \begin{equation} \label{eq:diamond}
      \min_{k \leq n \leq k + q_{\ell}} f(e^{2 \pi i n \alpha}) > \frac{20 L}{q_{\ell}}, 
 \end{equation}
 then
 $$ \min_{0 \leq t \leq 1} f(e^{2 \pi i t}) > 0.$$
Conversely, if $ \min_{0 \leq t \leq 1} f(e^{ 2 \pi i t}) > 0$, then there exists $\ell_0 \in \mathbb{N}$ such that \eqref{eq:diamond} holds for all $k \geq 0$ and all $\ell \geq \ell_0$.
\end{lemma}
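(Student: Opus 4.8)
The plan is to exploit the fact that $\{n\alpha \bmod 1 : k \le n \le k+q_\ell\}$ is a well-distributed set of points on the circle, in fact a $1/q_\ell$-net, and that $f$ being $L$-Lipschitz cannot dip from a value $> 20L/q_\ell$ at a sampled point down to $\le 0$ somewhere in between. For the forward direction I would argue as follows. Fix a target point $e^{2\pi i t}$. By the three-distance theorem (or directly from the theory of continued fractions: the first $q_\ell$ multiples $\{0, \alpha, 2\alpha, \dots, (q_\ell-1)\alpha\}$ partition the circle into arcs of only two or three distinct lengths, each of length $\le 2/q_\ell$ since $\|q_{\ell-1}\alpha\| < 1/q_\ell$ and consecutive convergents interlace), the set $\{n\alpha \bmod 1 : k \le n \le k+q_\ell\}$ — which is a translate of $\{0,\alpha,\dots,q_\ell\alpha\}$ — meets every arc of length $2/q_\ell$; hence there is some $n$ in the range with $\|n\alpha - t\| \le C/q_\ell$ for an absolute constant $C$ (one can take $C$ so that the Euclidean arc-length bound translates to the chordal distance $|e^{2\pi i n\alpha} - e^{2\pi i t}|$; $C = 2$ on the circle in arc-length, so the chord is $\le 2\pi \cdot 2/q_\ell$, and with a slightly careful count one arranges the constant to come out at most $10/q_\ell$ in chordal distance). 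Then
\[
f(e^{2\pi i t}) \ge f(e^{2\pi i n\alpha}) - L \, |e^{2\pi i n\alpha} - e^{2\pi i t}| > \frac{20L}{q_\ell} - L \cdot \frac{20}{q_\ell} \ge 0,
\]
where I have been slightly sloppy about the constant; the honest version is that the chordal distance from any point of the circle to the sample set is at most $(\text{something})/q_\ell$, and the hypothesis $20L/q_\ell$ is chosen with enough room to absorb it. Since $t$ was arbitrary and the circle is compact, $\min_{0\le t\le 1} f(e^{2\pi i t}) > 0$ (the infimum is attained, hence positive).

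For the converse: if $m := \min_{0 \le t \le 1} f(e^{2\pi i t}) > 0$, then for \emph{every} point $z$ on the circle we have $f(z) \ge m$, so in particular $\min_{k \le n \le k+q_\ell} f(e^{2\pi i n\alpha}) \ge m$ for all $k$. Thus \eqref{eq:diamond} holds as soon as $m > 20L/q_\ell$, i.e. as soon as $q_\ell > 20L/m$; since $q_\ell \to \infty$, there is $\ell_0$ with $q_{\ell_0} > 20L/m$, and then $q_\ell \ge q_{\ell_0} > 20L/m$ for all $\ell \ge \ell_0$ (the $q_\ell$ are increasing), uniformly in $k$. This direction is essentially immediate.

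The one genuine obstacle is pinning down the covering constant in the forward direction — verifying that the sampled set $\{n\alpha \bmod 1 : k \le n \le k+q_\ell\}$ really is fine enough that every point of the circle lies within chordal distance $20/q_\ell$ of it, and that this works for the \emph{shifted} window starting at $k$ rather than at $0$. The shift is harmless because $\{(k+j)\alpha \bmod 1 : 0 \le j \le q_\ell\}$ is just a rotation of $\{j\alpha \bmod 1 : 0 \le j \le q_\ell\}$, and rotation is an isometry of the circle, so the covering radius is unchanged. For the radius itself, the cleanest route is the three-distance theorem applied to the $q_\ell + 1$ points $\{j\alpha\}_{j=0}^{q_\ell}$: their gaps take at most three values, the largest of which is bounded by $\|q_{\ell-1}\alpha\| + \|q_\ell\alpha\| < 1/q_{\ell-1}$ — but one wants a bound in terms of $q_\ell$, and using $q_\ell$ \emph{consecutive} samples one gets maximal gap $< 1/q_{\ell-1} \le $ (roughly) a constant over $q_\ell$ only after accounting for $q_\ell/q_{\ell-1}$; the safe and elementary statement is that among any $q$ consecutive multiples of $\alpha$ the gaps are at most $1/\lfloor q/2 \rfloor$ or so. Rather than optimize, I would simply fix a convenient constant (the paper's choice of $20$ in \eqref{eq:diamond} is deliberately generous precisely so that no sharp gap estimate is needed) and carry the argument through, checking only that $20/q_\ell$ exceeds twice the maximal gap in arc length times $2\pi$ for the chordal conversion.
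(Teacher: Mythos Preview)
Your overall strategy matches the paper's: reduce to $k=0$ by rotation, show that the samples $\{n\alpha \bmod 1 : 0 \le n \le q_\ell\}$ form a net of radius $O(1/q_\ell)$ on the circle, then apply the Lipschitz bound; the converse is handled identically via compactness and $q_\ell \to \infty$. The only place you diverge is in establishing the covering radius, where you reach for the three-distance theorem and then cannot pin down the constant (your displayed inequality collapses to $>0-0$, which is not quite strict positivity). The paper sidesteps this entirely with a more elementary device: from the basic convergent estimate $|\alpha - p_\ell/q_\ell| \le 1/q_\ell^2$ one gets $|n\alpha - n p_\ell/q_\ell| \le 1/q_\ell$ for every $1 \le n \le q_\ell$, and since $\gcd(p_\ell,q_\ell)=1$ the numbers $n p_\ell/q_\ell \bmod 1$ are exactly the equispaced grid $\{j/q_\ell : 0 \le j < q_\ell\}$. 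Any $t \in [0,1]$ is therefore within $1/q_\ell$ of some grid point, hence within $3/q_\ell$ of some $n\alpha$; multiplying by $2\pi$ gives arc-distance at most $19/q_\ell$, and the Lipschitz bound yields $f(e^{2\pi i t}) \ge 20L/q_\ell - 19L/q_\ell = L/q_\ell > 0$ with room to spare. Your three-distance route would also close once you use the correct continued-fraction fact $\|q_{\ell-1}\alpha\| < 1/q_\ell$ (not merely $<1/q_{\ell-1}$, which is what caused your worry about $q_\ell/q_{\ell-1}$), but the paper's comparison-to-the-rational-grid argument is shorter and makes the constant $20$ transparent.
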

\begin{proof} We first show that it suffices to prove the statement for $k=0$. For arbitrary $k \geq 0$, we can define $g:\left\{z \in \mathbb{C}: |z| = 1\right\} \rightarrow \mathbb{R}$ via
$$ g(z) = f( e^{2 \pi i k \alpha}z)$$
which corresponds to a rotation and does not change the Lipschitz constant. Then
$$  \min_{k \leq n \leq k + q_{\ell}} f(e^{2 \pi i n \alpha}) =  \min_{0 \leq n \leq  q_{\ell}} g(e^{2 \pi i n \alpha})$$
and we see that one implies the other. We will now argue that $n \alpha$, when interpreted modulo 1, is rather close to a set of equispaced points provided $n$ is chosen at the correct scale (suggested by the denominators of the convergents). The crucial ingredient is
(see \cite{bugeaud}) the inequality
$$ \left| \alpha - \frac{p_{\ell}}{q_{\ell}} \right| \leq \frac{1}{q_{\ell}^2}.$$
Therefore, for all $1 \leq n \leq q_{\ell}$, we have
$$ \left| n\alpha - \frac{p_{\ell} n}{q_{\ell}} \right| \leq \frac{n}{q_{\ell}^2} \leq \frac{1}{q_{\ell}}.$$
Since $p_{\ell}$ and $q_{\ell}$ are always coprime (see \cite{bugeaud}), we have
$$ \left\{  \frac{p_{\ell} n}{q_{\ell}}~\mbox{mod}~1:1 \leq n \leq q_{\ell} \right\} =  \left\{  \frac{n}{q_{\ell}}:0 \leq n \leq q_{\ell} -1\right\}.$$
Let now $0 \leq t \leq 1$ be arbitrary. Then there exists $1 \leq n \leq q_k$ such that
$$ \left| \left(\frac{p_{\ell} n}{q_{\ell}} \mod 1\right) - t \right| \leq \frac{1}{q_{\ell}}.$$
We deduce that 
$ \left| \left(n \alpha \mod 1\right) - t \right| \leq 3/q_{\ell}$
 and
$$ \left| \left(2 \pi n \alpha \mod 2\pi \right) - 2 \pi t \right| \leq \frac{2 \pi \cdot 3}{q_{\ell}} \leq \frac{19}{q_{\ell}}.$$
Using Lipschitz-continuity, we have
\begin{align*}
    f(e^{2\pi i t}) &\geq f(e^{2\pi i n \alpha}) - L \left| 2\pi n \alpha - 2 \pi t\right| \\
    &\geq \frac{20 L}{q_k} - \frac{19}{q_k} \cdot L \geq \frac{L}{q_k} > 0.
\end{align*}
The second part of the statement is easy: $f$, if positive, is a continuous function on a compact interval and thus bounded from below by a positive constant: the sequence $q_{\ell}$ is unbounded and the conclusion follows.
\end{proof}

\subsection{Proof of Theorem 2.}
\begin{proof}
    We assume $\alpha , z_{-1}$ are fixed.  We also suppose that, starting at $z_k$, the chosen signs are constant for a large number of iterations (we assume at least until $z_N$ where $N > k$) and that w.l.o.g. the sign that ends up being chosen is always $+$ (the case where the sign is always $-$ is completely analogous). Then, for $k < n \leq N$  
\begin{align*}
        z_n &= z_k + \sum_{j=k+1}^{n} e^{2\pi i j \alpha} = z_k +\frac{e^{2\pi i \alpha (n+1)}  - e^{2\pi i (k+1) \alpha}  }{e^{2\pi i \alpha} -1}.
\end{align*}
Introducing the map $h_k:\mathbb{S}^1 \rightarrow \mathbb{C}$
$$ h_k(z) = z_k +\frac{z  - e^{2\pi i (k+1) \alpha}  }{e^{2\pi i \alpha} -1} $$
 we have, for $k < n \leq N$,
\begin{align} \label{eq:one}
     z_n = h_k(e^{2 \pi i \alpha (n+1)}).
\end{align}

We would like to guarantee that, for all $n > k$,
$$ \left|  z_n + e^{2\pi i \alpha (n+1)} \right| <   \left|  z_n - e^{2\pi i \alpha (n+1)} \right|$$
since that would then enforce that the sign chosen at each point is always $+$. Using \eqref{eq:one},
this is equivalent to
$$ \left|    h_k(e^{2 \pi i \alpha (n+1)}) + e^{2\pi i \alpha (n+1)} \right| <   \left|   h_k(e^{2 \pi i \alpha (n+1)}) - e^{2\pi i \alpha (n+1)} \right|.$$
This inequality would be implied by the truth of the inequality
$$ \forall~|z| = 1 \qquad  \left|    h_k(z) + z\right| <   \left|   h_k(z) - z \right|.$$
Every time we pick another element of the sequence, we certify the truth of that inequality at another point on $\mathbb{S}^1$. Moreover, the sequence $e^{2\pi i \alpha (n+1)}$ is dense in $\mathbb{S}^1$.
We apply Lemma 1 to to the function
$$f(e^{2\pi i t}) = \left|   h_k(e^{2\pi i t}) - e^{2\pi i t} \right| - \left|    h_k(e^{2\pi i t}) + e^{2\pi i t}\right|.$$
Strict positivity of $f$ would guarantee stability of the sign choice for all time.
Observing that
$$ \left| \frac{d}{dt} h_k(e^{2 \pi it}) \right| = \left| \frac{d}{dt} \frac{e^{2 \pi it} }{e^{2\pi i \alpha} -1} \right| = \frac{2 \pi }{|e^{2\pi i \alpha} - 1|}$$
we note that $f$ is Lipschitz continuous with Lipschitz constant satisfying 
 $$ L \leq 4\pi +  \frac{4\pi}{|e^{2\pi i \alpha} - 1|}.$$
At this point, we can invoke Lemma 1. The desired stability is implied by the existence of a convergent $p_k/q_k$ such that
 $$ \min_{1 \leq n \leq q_k} f(e^{2 \pi i n \alpha}) > \frac{20}{q_k} \left(  4\pi +  \frac{4\pi}{|e^{2\pi i \alpha} - 1|} \right)$$
which is the desired statement. 
\end{proof}
\textbf{Remark.} The truth of the inequality
$$ \forall~|z| = 1 \qquad  \left|    h_k(z) + z\right| <   \left|   h_k(z) - z \right|$$
is necessary but not sufficient for the sign choice to be stable. However, it is \textit{almost} necessary in a way that renders Theorem 1 applicable for virtually all choices of initial data. We will now explain this in greater detail. First, suppose that there exists $|z_*| = 1$ such that the inequality \textit{strictly fails}
 $$   \left|    h_k(z_*) + z_*\right| >   \left|   h_k(z_*) - z_* \right|,$$
then there exists an entire open interval $J \subset \mathbb{S}^1 \subset \mathbb{C}$ containing $z_*$ for which the inequality fails. Since $e^{2\pi i \alpha n}$ is dense on $\mathbb{S}^1$, at some point in the future we are bound to encounter an index $n$ where one would end up choosing the $-$ sign. In that case, the statement we are trying to prove is actually false. This means that the only remaining case is
$$ \forall~|z| = 1 \qquad  \left|    h_k(z) + z\right| \leq   \left|   h_k(z) - z \right|$$
with equality for some $z_*$. This case is subtle because the relevant question is then whether an equation of the type $z_* = e^{2\pi i \alpha n}$ has a solution in $n \in \mathbb{N}$: if it does, then there exists a point at which the sequence becomes undefined. If not, then the $+$ sign is stable. However, distinguishing between the case of strict inequality and strict failure from a finite number of samples is impossible. In summary, we expect Theorem 1 to be applicable in all practically arising cases.\\

\textbf{Remark.} We also note that the inequality
$$ \forall~|z| = 1 \qquad  \left|    h_k(z) + z\right| <   \left|   h_k(z) - z \right|$$
is the type of statement that is frequently used in complex analysis to deduce that two functions have the same number of roots inside the unit disk (via Rouch\'e's theorem). It is not clear to us whether this is a coincidence.

\subsection{Proof of Theorem 3}
\begin{proof}
The main idea behind the proof is to show that the periodic setting can be reduced to the setting of Theorem 2 by arguing along a subsequence. Let us again assume that $\alpha > 0$ is a given irrational number. We assume that, starting at $z_k$, the arising sequence of signs is $p-$periodic where $p \in \mathbb{N}$ at least in the range $k \leq n \leq N$ where $N$ is large. Then, for $k \leq n \leq N$ we have
$$        z_n = z_k + \sum_{j=k+1}^{n} \varepsilon_j e^{2\pi i j \alpha},$$
where $\varepsilon_j \in \left\{-1,1\right\}$ and the sequence of signs $(\varepsilon_j)_{j \in \mathbb{N}}$ is $p-$periodic for $k \leq j \leq N$. This implies, just as in the proof of Theorem 1, that
\begin{align*}
\sum_{j=k+(\ell -1)p+1}^{k+\ell p} \varepsilon_j e^{2\pi i j \alpha} &=\sum_{j=k+1}^{k+p} \varepsilon_{j+ \ell p} e^{2\pi i (j+\ell p) \alpha} \\
&= \sum_{j=k+1}^{k+p} \varepsilon_{j} e^{2\pi i j \alpha}  e^{2\pi i \ell p \alpha} = e^{2 \pi i  \ell \alpha p } \sum_{j=k+1}^{k+p} \varepsilon_j e^{2\pi i j \alpha}.    
\end{align*}
Observe that this last sum is independent of $\ell$: it only depends on the period $p$ and the starting point $k$. Abbreviating
$$ q = \sum_{j=k+1}^{k+p} \varepsilon_j e^{2\pi i j \alpha},$$
we deduce
\begin{align} \label{eq:2}
     z_{k + \ell p} &= z_k + \sum_{j=k+1}^{k+\ell p}  \varepsilon_j e^{2\pi i j \alpha} = z_k + \sum_{s = 0}^{\ell-1} \sum_{j= k + s p + 1}^{k + (s+1)p} \varepsilon_j e^{2\pi i j \alpha} \\
     &= z_k + q\sum_{s=0}^{\ell -1} e^{2 \pi i \alpha p s} = z_k + q \frac{e^{2\pi i \alpha p \ell}-1}{e^{2\pi i \alpha p}-1}. \nonumber
\end{align}
By definition of the sign sequence $(\varepsilon_{n})_{n=0}^{\infty}$,
$$ z_{k+\ell p +1} = z_{k + \ell p} + \varepsilon_{k + \ell p + 1} e^{2 \pi i (k+\ell p +1)\alpha}$$
which implies (since that sign has been chosen so as to minimize the absolute value) 
\begin{equation} \label{eq:3}
\left| z_{k + \ell p} + \varepsilon_{k + \ell p + 1} e^{2 \pi i (k+\ell p +1)\alpha} \right| < \left| z_{k + \ell p} - \varepsilon_{k + \ell p + 1} e^{2 \pi i (k+\ell p +1)\alpha} \right|.
\end{equation}
The next step consists of observing that  $ \varepsilon_{k + \ell p + 1}$ is independent of $p$ (by $p$-periodicity for small indices). Thus
$$\varepsilon = \varepsilon_{k + \ell p + 1} e^{2 \pi i (k+1)\alpha}$$
is independent of $\ell$ as long as $k \leq k + \ell p + 1 \leq N$. Plugging \eqref{eq:2} into \eqref{eq:3}, we observe that,
 as long as $k \leq k + \ell p + 1 \leq N$,
$$ \left| z_k + q \frac{e^{2\pi i \alpha p \ell}-1}{e^{2\pi i \alpha p}-1} + \varepsilon  e^{2\pi i \ell p \alpha}\right| <  \left| z_k + q \frac{e^{2\pi i \alpha p \ell}-1}{e^{2\pi i \alpha p}-1} - \varepsilon  e^{2\pi i \ell p \alpha}\right|.$$
It would be nice if that inequality were true for all $\ell$ sufficiently large and, much like in the proof of Theorem 2, we see that this would be implied by the truth of the inequality
$$ \forall ~t \in \mathbb{R} \qquad \left| z_k + q \frac{e^{it }-1}{e^{2\pi i \alpha p}-1} + \varepsilon  e^{it}\right| <  \left| z_k + q \frac{e^{it}-1}{e^{2\pi i \alpha p}-1} - \varepsilon  e^{it}\right|.$$
Applying the Sampling Lemma now, just as in the proof of Theorem 2, shows that if one observes strict (quantitative) inequality for a sufficiently long time, we can deduce the truth of the inequality. The choice of the subsequence was completely arbitrary and thus, by repeating the argument $p-1$ more times, we get the desired description.
\end{proof}

\subsection{Proof of Proposition 1.}
\begin{proof}
    Note that if \(z'_{n} = z_{n} + w\) with \(|w| < m/2\), then, by the triangle inequality,
    \[ |z_n + a_n| - |w| < |z_n + w + a_n| < |z_n + a_n| + |w|, \]
    as well as
    \[ |z_n - a_n| - |w| < |z_n + w - a_n| < |z_n - a_n| + |w|. \]
    Therefore,
 \begin{align*}
     |z_n + a_n| - |z_n - a_n| - m &< |z_n + w + a_n| - |z_n + w - a_n| \\
     &< |z_n + a_n| - |z_n - a_n| + m. 
 \end{align*} 
    Put another way, \(|z_n + w + a_n| - |z_n + w - a_n|\) is not more than \(m\) away from \(|z_n + a_n| - |z_n - a_n|\). Since \(|z_n + a_n| - |z_n - a_n|\) is at least \(m\) in magnitude, it must be the same sign as \(|z_n + w + a_n| - |z_n + w - a_n|\), which means that \(z'_{n + 1} = z_{n + 1} + w\). We assume that \(z'_{-1} = z_{-1} + w\), so by induction, we have the desired result.
\end{proof}

\subsection{Proof of Proposition 2}
\begin{proof}
 Note that \[ e^{2\pi i (1 - \alpha_0)n} = e^{2\pi in} \cdot e^{-2\pi i \alpha_0 n} = \overline{e^{2\pi i \alpha_0 n}}. \]  Since both the initial point and the additive sequence $ e^{2\pi i (1 - \alpha_0)n}$ are entirely conjugated, and the Euclidean norm is unaffected by conjugation, the new sign sequence must be identical to the original. Therefore, the resulting sequence must also be entirely conjugated. 
 Note that \[ e^{2\pi i (\alpha_0 \pm 0.5)n} = e^{2\pi i \alpha_0 n} \cdot e^{\pm \pi i n} = e^{2\pi i \alpha_0n} \cdot (-1)^n. \]
 Since the initial point is unchanged and the additive sequence $e^{2\pi i (\alpha_0 \pm 0.5)n}$ is only different by a sign, at each step of the sequence, the same two choices are presented, giving the same values of \(z_n\). In particular, because the additive sequence alternates in sign from the original, the new sign sequence must also alternate in sign relative to the original.
 If \(|z + w| < |z - w|\), then of course \(|-z - w| < |-z + w|\). Then by induction we obtain the third statement immediately. 
\end{proof}

 \subsection{Proof of Proposition 3}
The proof of Proposition 3 will make use of a simple Lemma that will also be  used in the proof of Theorem 4. We present it separately.

\begin{lemma}
Let $A,B,C \in \mathbb{C}$ with $C \neq 0$. The set of complex numbers $z_{-1}$ with the property that for all $|z|=1$
$$ \left| z_{-1} + A - z B - Cz\right| < \left|z_{-1} +A  - zB \right|$$
is an open ball of radius $-|C|/2 - \emph{Re}~B \overline{C}/|C|$ centered at $-A$. 
\end{lemma}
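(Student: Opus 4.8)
The plan is to turn the family of inequalities — one for each point of the unit circle — into a single scalar inequality by squaring and then optimizing over $|z|=1$. First I would substitute $w = z_{-1} + A$, so the defining condition becomes: for all $|z|=1$, $|w - zB - Cz| < |w - zB|$. Both sides are nonnegative, so this is equivalent to the same strict inequality between the squares. Expanding
$$|w - zB - Cz|^2 = |w - zB|^2 - 2\,\mathrm{Re}\big((w - zB)\overline{Cz}\big) + |Cz|^2$$
and using $|Cz| = |C|$ (since $|z| = 1$), the condition collapses to $2\,\mathrm{Re}\big((w - zB)\overline{Cz}\big) > |C|^2$.

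Next I would simplify the cross term. Writing it out, $(w - zB)\overline{Cz} = w\overline{C}\,\overline{z} - |z|^2 B\overline{C} = w\overline{C}\,\overline{z} - B\overline{C}$, again using $|z| = 1$, so the condition for a fixed $z$ reads $\mathrm{Re}(w\overline{C}\,\overline{z}) > \tfrac12 |C|^2 + \mathrm{Re}(B\overline{C})$. The key observation is then that requiring this for \emph{all} $|z| = 1$ is the same as requiring it for the worst $z$: as $z$ runs over the unit circle so does $\overline{z}$, hence $w\overline{C}\,\overline{z}$ runs over the circle of radius $|w\overline{C}| = |C|\,|w|$ centered at the origin, and its real part attains minimum $-|C|\,|w|$ (the minimum being attained since the circle is compact).

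Therefore the entire family of inequalities holds precisely when $-|C|\,|w| > \tfrac12 |C|^2 + \mathrm{Re}(B\overline{C})$. Dividing by $|C| > 0$ and rearranging yields $|w| < -\tfrac{|C|}{2} - \tfrac{\mathrm{Re}(B\overline{C})}{|C|}$, i.e. $|z_{-1} - (-A)| < -\tfrac{|C|}{2} - \tfrac{\mathrm{Re}(B\overline{C})}{|C|}$, which is exactly the claimed open ball centered at $-A$ (and empty when the right-hand side is nonpositive, which is consistent with the statement). There is no genuine obstacle in this argument; the only points requiring care are the bookkeeping of conjugates and the fact that $|z| = 1$ gets used twice — once to replace $|Cz|$ by $|C|$ and once to eliminate $|z|^2$ from the cross term — together with the "worst-case $z$" step, which is what makes the reduction to a ball clean.
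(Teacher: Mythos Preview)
Your proof is correct and follows essentially the same route as the paper's: square the inequality, use $|z|=1$ to simplify the cross term to $\mathrm{Re}(w\overline{C}\,\overline{z}) - \mathrm{Re}(B\overline{C})$, and then optimize over the unit circle to obtain the radius condition. Your write-up is arguably a touch cleaner (explicitly noting equivalence rather than implication, and handling the degenerate empty-ball case), but the underlying argument is the same.
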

\begin{proof}
    For $x,y \in \mathbb{C}$, the inequality $|x+y| < |x|$ implies, after squaring and using that $|z|^2 = z \overline{z}$, that
$$  \mbox{Re}~ x \overline{y} < - \frac{|y|^2}{2}.$$
We apply this inequality with $x = z_{-1} + A - z B$ and $y = -Cz$
Since $y = -Cz$ and $|z|=1$, we have $-|y|^2/2 = -|C|^2/2$. This inequality can be written as
\[ \mbox{Re}~ \left[( z_{-1} + A - z B) (-\overline{Cz}) \right] < - \frac{|C|^2}{2}. \]
Using $(-z)(-\overline{z}) = 1$, this is
\[ \mbox{Re}~ \left[ (z_{-1} + A)(-\overline{Cz}) \right] < -\frac{|C|^2}{2} - \mbox{Re}~ B\overline{C} \]
This inequality needs to hold for all $|z|=1$, thus this is equivalent to
\[ \max_{|z|=1} \quad \mbox{Re}~ \left[ (z_{-1} + A)(-\overline{Cz}) \right] < -\frac{|C|^2}{2} - \mbox{Re}~ B\overline{C} \]
We note that since $z$ ranges over all complex numbers on the unit circle, so does $-\overline{z}$. We now argue that for any complex
$w = |w| e^{i \phi}$, one has
$$ \max_{0 \leq t \leq 2\pi} \mbox{Re}\left[ w e^{it} \right] = \max_{0 \leq t \leq 2\pi} \mbox{Re}\left[ |w| e^{i \phi} e^{it} \right] = |w|.$$
Thus the inequality turns into
$$ |z_{-1} + A|\cdot|C| < -\frac{|C|^2}{2} - \mbox{Re}~B\overline{C}$$
or, equivalently,
$$ |z_{-1} + A| < -\frac{|C|}{2} - \mbox{Re}~B \frac{\overline{C}}{|C|}.$$
\end{proof}

\begin{proof} We start with the case where all the signs are $-1$, the other case will be very similar.
    Suppose $z_{-1} \in \mathbb{C}$ and the sequence always chooses the sign $-1$. Then
    $$ z_n = z_{-1} - \sum_{k=0}^{n} e^{2\pi i \alpha k} = z_{-1} - \frac{e^{2\pi i \alpha (n+1)} - 1}{e^{2\pi i \alpha} -1}.$$
    Since the sign is always $-1$, for all $n \geq -1$
    $$ |z_n - e^{2\pi i \alpha (n+1)}| <  |z_n + e^{2\pi i \alpha (n+1)}|.$$
    Plugging in the formula for $z_n$, we deduce $A_n < B_n$ where
        $$ A_n = \left|z_{-1} + \frac{1}{e^{2\pi i \alpha}-1}  - e^{2\pi i \alpha (n+1)} \left( \frac{1}{e^{2\pi i \alpha}-1} + 1\right)\right| $$
      and
        $$ B_n = \left|z_{-1} + \frac{1}{e^{2\pi i \alpha}-1}  - e^{2\pi i \alpha (n+1)} \left( \frac{1}{e^{2\pi i \alpha}-1} - 1\right)\right|.$$        
$\alpha$ is irrational and $e^{2 \pi i \alpha n}$ is dense. By density, the truth of $A_n < B_n$ for all $n \geq N$ requires, for all $|z| = 1$, the inequality $C \leq D$ where
    \begin{align*}
        C &=  \left|z_{-1} + \frac{1}{e^{2\pi i \alpha}-1}  - z\left( \frac{1}{e^{2\pi i \alpha}-1} - 1\right) - 2z\right|\\
        D &= \left|z_{-1} + \frac{1}{e^{2\pi i \alpha}-1}  - z \left( \frac{1}{e^{2\pi i \alpha}-1} - 1\right)\right|
    \end{align*}
Appealing to the Lemma, the set of $z_{-1}$ for which this holds is a disk of radius $ -\mbox{Re} (1/(e^{2 \pi i \alpha} -1)$ with center \(1 / (1 - e^{2\pi i \alpha})\). This real part can be computed via
\begin{align*}
 \mbox{Re}~ \frac{1}{1-e^{2\pi i \alpha}} &= \mbox{Re}~ \frac{1}{1 - \cos{(2 \pi \alpha)} - i \sin{(2\pi \alpha)}} \\
 &= \mbox{Re} ~\frac{1 - \cos{(2 \pi \alpha)} + i \sin{(2\pi \alpha)}}{2 - 2\cos{(2\pi \alpha)}) } = \frac{1}{2}
\end{align*}
This is the desired claim.
Let us now suppose the sign is always $+1$. The argument is virtually identical and after repeating the same computation with flipped signs, we arrive at
$$\left|z_{-1} - \frac{1}{e^{2\pi i \alpha}-1} \right| \leq \frac{1}{2}.$$
\end{proof}

\subsection{Proof of Proposition 4.}
\begin{proof}
    By Proposition \ref{prop:sym} (b), we know that our sign sequence is given by \(\varepsilon_n \cdot (-1)^n\). Since \(\varepsilon_n\) is \(p\)-periodic and \((-1)^n\) is 2-periodic, we certainly have that the period of \(\varepsilon_n \cdot (-1)^n\) is a factor of \(2p\). In particular,
    \[ \varepsilon_n \cdot (-1)^n = \varepsilon_{n + 2p} \cdot (-1)^{n + 2p} \]
    for all \(n\). Notice that period is not a factor of 2 or \(p\), as
    \[ \varepsilon_n \cdot (-1)^n =  \varepsilon_{n + 2} \cdot (-1)^{n + 2} \Rightarrow \varepsilon_n = \varepsilon_{n + 2} \]
    cannot be true for all \(n\), and
    \[ \varepsilon_n \cdot (-1)^n = \varepsilon_{n + p} \cdot (-1)^{n + p} \Rightarrow (-1)^n = (-1)^{n + 1} \]
    is a contradiction. Thus, we see that our period is indeed \(2p\).
\end{proof}

\subsection{Proof of Theorem 4}
\begin{proof}
    Suppose the sequence $(z_n)$ has signs that are $p-$periodic starting at index $z_k$. Then, recalling the proof of Theorem 3 and with the choice of
    $$ q = \sum_{j=k+1}^{k+p} \varepsilon_j e^{2\pi i j \alpha},$$
we have, for all $\ell \in \mathbb{N}$, that the subsequence $z_{k+\ell p}$ is given by
\begin{align*} 
     z_{k + \ell p} = z_k + q\sum_{s=0}^{\ell -1} e^{2 \pi i \alpha p s} = z_k + q \frac{e^{2\pi i \alpha p \ell}-1}{e^{2\pi i \alpha p}-1}.
\end{align*}
By definition of the signs $\varepsilon_n \in \left\{-1,1\right\}$
$$ z_{n+1} = z_n + \varepsilon_{n+1} e^{2\pi i(n+1) \alpha}$$
we have
$$ z_{k+\ell p +1} = z_{k + \ell p} + \varepsilon_{k + \ell p + 1} e^{2 \pi i (k+\ell p +1)\alpha}.$$
Since the signs are chosen so as to minimize the absolute value
\begin{align*}
\left| z_{k + \ell p} + \varepsilon_{k + \ell p + 1} e^{2 \pi i (k+\ell p +1)\alpha} \right| < \left| z_{k + \ell p} - \varepsilon_{k + \ell p + 1} e^{2 \pi i (k+\ell p +1)\alpha} \right|.
\end{align*}
Again, as in the proof of Theorem 3, we recall that  $ \varepsilon_{k + \ell p + 1}$ is independent of $\ell$ because the signs are, by assumption $p-$periodic and that thus
$$\varepsilon = \varepsilon_{k + \ell p + 1} e^{2 \pi i (k+1)\alpha} \qquad \mbox{is independent of}~\ell.$$ 
Hence, for all $\ell$ and with that choice of $\varepsilon$,
\begin{equation} \label{eq:4}
 \left| z_k + q \frac{e^{2\pi i \alpha p \ell}-1}{e^{2\pi i \alpha p}-1} + \varepsilon  e^{2\pi i \ell p \alpha}\right| <  \left| z_k + q \frac{e^{2\pi i \alpha p \ell}-1}{e^{2\pi i \alpha p}-1} - \varepsilon  e^{2\pi i \ell p \alpha}\right|.
 \end{equation}
Since $\alpha$ is irrational, so is $\alpha p$ which forces $e^{2 \pi i \alpha p \ell}$ to be dense as a sequence of $\ell$ and we can thus deduce that 
\begin{equation} \label{eq:5}
\forall ~t \in \mathbb{R} \qquad \left| z_k + q \frac{e^{it }-1}{e^{2\pi i \alpha p}-1} + \varepsilon  e^{it}\right| \leq  \left| z_k + q \frac{e^{it}-1}{e^{2\pi i \alpha p}-1} - \varepsilon  e^{it}\right|.
\end{equation}
This is exactly the type of inequality covered by Lemma 2 and we deduce that $z_k$ is contained in a ball centered around some point $y_k$ and with a certain radius $r_k$
$$ z_k \in B(y_k, r_k).$$
The very same argument can now be carried out for $z_{k+1}$ for which we deduce that
$z_{k+1} \in B(y_{k+1}, r_{k+1})$ and so on. After we have completed one period and arrive at $z_{k+p}$, we stop getting any new information: one could run the argument for $z_{k+p}$ and one would arrive
at $z_{k+p} \in B(y_{k+p}, r_{k+p})$ but one could alternatively just rerun the argument for $z_k$ and force $\ell \geq 2$. By periodicity, one would arrive at Equation \eqref{eq:4} with an additional restriction on the choice of $\ell$. However, since Equation \eqref{eq:4} is implied by Equation \eqref{eq:5}, the restriction does not play any further role. We are therefore interested in the set
$$ \left\{z_{-1} \in \mathbb{C}: \forall ~ k \leq n \leq k + p-1: z_n \in B(y_n, r_n) \right\}.$$
At this point, we argue backwards: suppose $X \subset \mathbb{C}$ and $z_n \in X$. What does this tell us about $z_{n-1}$? We always have $z_{n} = z_{n-1} \pm e^{2 \pi i \alpha n}$. For a given point $w \in \mathbb{C}$, the regions of $z$ for which $|z+w| < |z-w|$ and $|z+w| > |z-w|$ are two half-spaces $H_1$ and $H_2$ and the region where $|z+w| = |z-w|$ is a line going through the origin and separating the two half spaces. Thus, the relevant `pre-image', meaning the set of $z_{n-1}$ which, under the procedure, would be mapped to $z_n$ is given by 
$$ \left\{z \in H_1: z+w \in X \right\} \cup \left\{z \in H_2: z-w \in X \right\},$$
where \(w = e^{2\pi i \alpha n}\), and \(H_1, H_2\) are defined with respect to this \(w\). This can also be written as  $\left( H_1 \cap (X-w) \right) \cup \left( H_2 \cap (X+w) \right)$. If $X$ can be written as a finite union of finite intersections of disks and half-spaces, then the same is true for $X-w$ (by translation invariance) and the same is true for $H_1 \cap (X-w)$ which is just one more intersection with a half space. The same argument applies to $H_2 \cap (X+w)$. Taking then an additional intersection with $B(y_{n-1}, r_{n-1})$ does not change the desired property either and we can induct backwards to $z_{-1}$.
\end{proof}

\textbf{Acknowledgments.} We are grateful to Anthony Quas for telling us about the area of piecewise isometries and we are indebted to Arek Goetz for extensive discussions and many helpful remarks.

\end{document}